\newtheorem{theorem}{Theorem}[section]
\newtheorem{lemma}[theorem]{Lemma}
\newtheorem{proposition}[theorem]{Proposition}
\newtheorem{corollary}[theorem]{Corollary}
\theoremstyle{definition}
\newtheorem{definition}[theorem]{Definition}
\newtheorem{example}[theorem]{Example}
\newtheorem{question}[theorem]{Question}
\newcommand{\Tr}{\text{Tr}}
\newcommand{\id}{\text{id}}
\newcommand{\End}{\text{End}}
\newcommand{\Hom}{\text{Hom}}
\newcommand{\Rep}{\text{Rep}}
\newcommand{\op}{{\text{op}}}
\newcommand{\g}{\mathfrak{g}}
\newcommand{\C}{\mathcal{C}}
\newcommand{\ben}{\begin{enumerate}}
\newcommand{\een}{\end{enumerate}}
\newcommand{\Vect}{{\text{Vect}}}
\newcommand{\be}{{\bf 1}}
\theoremstyle{plain}
\newtheorem*{sol}{Solution}
\theoremstyle{definition}
\theoremstyle{remark}
\newcommand{\solu}[1]{\begin{sol}{\bf (\ref{#1})}}
\def\g{\mathfrak{g}}
\def\C{\mathcal{C}}
\def\D{\mathcal{D}}
\def\Vect{\mathrm{Vect}}
\def\N{\mathcal{N}}
\def\M{\mathcal{M}}
\def\End{\mathrm{End}}
\def\Hom{\mathrm{Hom}}
\def\Gr{\mathop{Gr}}
\def\Vec{\mathrm{Vec}}
\def\k{\mathbf{k}}
\def\Rep{\mathop{\mathrm{Rep}}\nolimits}
\newcommand{\CM}  {{}_{\C}\M}
\newcommand{\CN}  {{}_{\C}\N}
\newcommand{\CCm}  {{}_{\C}\C}
\newcommand{\IHom}        {\operatorname{\underline{\mathsf{Hom}}}}
\def\calm             {{\mathcal M}}
\def\calmopp       {{\mathcal M^{\mathrm{op}}_{\phantom|}}}
\def\Sl               {{\rm S}^{\rm l}}
\def\Sr               {{\rm S}^{\rm r}}
\def\Sk               {{\rm S}_{\k}}
\newcommand{\Fun}         {\operatorname{\mathrm{Fun}}}
\begin{document}

\title{Eigenvalues of the squared antipode in finite dimensional weak Hopf algebras}

\author{Pavel Etingof}
\address{Department of Mathematics, Massachusetts Institute of Technology,
Cambridge, MA 02139, USA}
\email{etingof@math.mit.edu}

\address{Gregor Schaumann, 
Faculty of Mathematics, University of Vienna, Austria}
\email{gregor.schaumann@univie.ac.at}

\maketitle

\vskip .05in 
\centerline{\bf With an appendix by Gregor Schaumann}
\vskip .05in

\begin{abstract} We extend Schaumann's theory of pivotal structures on fusion categories matched to a module category and of module traces to the case of non-semisimple tensor categories, and use it to study eigenvalues of the squared antipode $S^2$ in weak Hopf algebras. In particular, we diagonalize $S^2$ for semisimple weak Hopf algebras in characteristic zero, generalizing the result of Nikshych in the pseudounitary case. We show that the answer depends only on the Grothendieck group data of the pivotalizations of the categories involved and the global dimension of the fusion category (thus, all eigenvalues belong to the corresponding number field). On the other hand, we study the eigenvalues of $S^2$ on the non-semisimple weak Hopf algebras attached to dynamical quantum groups at roots of $1$ defined by D. Nikshych and the author in 2000, and show that they depend nontrivially on the continuous parameters of the corresponding module category. We then compute these eigenvalues as rational functions of these parameters. The paper also contains an appendix by G. Schaumann discussing the connection between our generalization of module traces and the notion of an inner product module category introduced in \cite{Sch2}.  
\end{abstract} 

\section{Introduction} 

Let $\k$ be an algebraically closed field. Let $\C$ be a finite tensor category over $\k$, and 
$\M$ a semisimple indecomposable (left) module category over $\C$. Let $A$ be a commutative semisimple algebra 
such that $\M=A-{\rm mod}$, i.e., $A=\oplus_{i\in I}\k$, where $I$ is the set labeling the simple objects $M_i$ of $\M$. 
Then the action of $\C$ on $\M$ gives rise to a tensor functor $F: \M\to A-{\rm bimod}$, 
and the algebra $H:={\rm End}_\k F$ is a regular biconnected weak Hopf algebra with base $A$ (where ${\rm End}_\k F={\rm End}\overline{F}$ 
is the endomorphism algebra of the composition $\overline{F}$ of $F$ with the forgetful functor from $A$-bimodules to vector spaces).
Conversely, if $H$ is a regular biconnected weak Hopf algebra with commutative base $A$ then $\C=\Rep H$ is a finite tensor category, and 
$\M=A-{\rm mod}$ is an indecomposable semisimple $\C$-module category, and these assignments are inverse to each other
(see \cite{N1} and \cite{EGNO}, Subsection 7.23). 

Recall that $H$ has a natural automorphism -- the squared antipode $S^2$, which is semisimple in characteristic zero (\cite{N1}, Corollary 5.15).
Therefore, the characteristic polynomial of $S^2$ is an invariant 
of the pair $(\C,\M)$. The goal of this paper is to study this invariant. 

If $\M={\rm Vec}$ is the category of vector spaces, then $H$ is a finite dimensional Hopf algebra, and by Radford's theorem 
$S^2$ has finite order dividing $2\dim H$. In particular, the eigenvalues of $S^2$ are roots of unity of order dividing $2\dim H$.
A natural question is whether this statement extends to the case of weak Hopf algebras. 

Consider first the case when $H$ is semisimple (i.e., $\C$ a fusion category) and ${\rm char}(\k)=0$. In this case, it follows 
from Ocneanu rigidity (\cite{EGNO}, Subsection 9.1) that the eigenvalues of $S^2$ are algebraic numbers, but getting 
more detailed information about them requires further study. When $\C$ is pseudounitary, 
this problem was solved by D. Nikshych in \cite{N2}. Namely, it is shown in \cite{N2} that in this case the eigenvalues of $S^2$,  
while in general not roots of unity, can be computed explicitly, and in fact depend only on the combinatorial information -- 
the $\Bbb Z_+$-module ${\rm Gr}(\M)$ over the Grothendieck ring ${\rm Gr}(\C)$ of $\C$. More precisely, the
characteristic polynomial of $S^2$ has the form 
\begin{equation}\label{eigfor}
\chi(z)=\prod_{i,j,k,l\in I}(z-\lambda_{ijkl})^{n_{ijkl}}, 
\end{equation} 
with 
$$ 
\lambda_{ijkl}:=\frac{m_jm_l}{m_im_k},\quad
n_{ijkl}:=\sum_{r\in J}N_{ri}^jN_{rl}^k,
$$
where $m_i$ are the Frobenius-Perron dimensions of $M_i$, $J$ labels the simple objects $X_r$ of $\C$, and 
$$
N_{ri}^j:=\dim {\rm Hom}(X_r\otimes M_i,M_j). 
$$

If a fusion category $\C$ is not pseudounitary, the situation is more complicated. So let us first assume that
the squared antipode is the conjugation by a trivial group-like element, i.e., 
$S^2(x)=gxg^{-1}$, where $g=yS(y)^{-1}$ and $y\in A_s\cong A$ is an invertible element of the source 
base of $H$ (this holds in all the examples we know). In this case, following Schaumann (\cite{Sch1}), we will say that the pivotal structure on $\C$ defined by $g$ is {\it matched} to the module category 
$\M$; we show that our definition is equivalent to that of \cite{Sch1}. In the pseudounitary case it is shown in \cite{N2} that this is automatic. 
In general, in the matched case it follows from \cite{Sch1} that the characteristic polynomial of $S^2$ is still given by formula \eqref{eigfor}, 
but with $m_i$ now being nonzero numbers such that for all $r,i$ we have 
$$
\sum_{j\in I} N_{ri}^j m_j=\dim(X_r)m_i,
$$
where $\dim(X_i)$ is the dimension of $X_i$ in the pivotal structure defined by $g$
(i.e., $N_r\bold m=\dim(X_r)\bold m$, where $N_r:=(N_{ri}^j)$ and $\bold m=(m_i)$). 
More precisely, the numbers $m_i$ are uniquely determined up to scaling (as the dimension character
$X\mapsto \dim(X)$ occurs with multiplicity one in the module ${\rm Gr}(\M)\otimes \k$ over ${\rm Gr}(\C)\otimes \k$), 
and are all nonzero. We show that this answer depends only on the global dimension $\dim(\C)$ of $\C$ and ${\rm Gr}(\M)$ as a $\Bbb Z_+$-module over ${\rm Gr}(\C)$. 

For a general fusion category $\C$ (i.e., without assuming the existence of a matched pivotal structure) 
we obtain a slightly more complicated but similar answer (see Theorem \ref{main}). This answer depends only on $\dim(\C)$ and ${\rm Gr}(\widetilde{\M})$ as a $\Bbb Z_+$-module over ${\rm Gr}(\widetilde{\C})$, where $\widetilde{\C}$ and $\widetilde{\M}$ are the pivotalizations of $\C,\M$. 

We also extend these results to the non-semisimple case (for ${\rm char}(\k)=0$). In this case, 
Ocneanu rigidity is no longer valid, and module categories over $\C$ (as well as $\C$ itself) 
may depend on continuous parameters. So one may wonder if the eigenvalues of $S^2$ 
may also depend on these parameters (and hence fail to be algebraic numbers, in general). 
We again consider the setting when $\M$ is matched to the pivotal structure on $\C$, 
and show that if the dimension character occurs with multiplicity (at most) one in the  
module ${\rm Gr}(\M)\otimes \k$ over ${\rm Gr}(\C)\otimes \k$ 
(e.g., if ${\rm Gr}(\M)\otimes \k$ is a cyclic module over ${\rm Gr}(\C)\otimes \k$)
then this does not happen and the eigenvalues of $S^2$ are algebraic numbers
which we compute explicitly. Namely, we show that the characteristic polynomial of $S^2$ 
is still given by \eqref{eigfor}, but with 
$$ 
n_{ijkl}:=\sum_{q,r\in J}N_{qi}^jC_{qr}N_{rl}^k,
$$
where $C=(C_{qr})$ is the Cartan matrix of $\C$. A similar result holds in positive characteristic. 

On the other hand, we consider the example of dynamical quantum groups at roots of $1$ 
introduced in \cite{EN}. In this case, the pivotal structure is matched to 
the corresponding module category, but the dimension character turns out to occur more than once
in the module ${\rm Gr}(\M)\otimes \k$. As a result, the particular choice 
of the eigenvector $\bold m=(m_i)$ of $X_r$ with eigenvalues given by this character actually depends 
on the continuous parameter of the module category (an element $\Lambda$ of the maximal torus of the corresponding simple algebraic group), and 
we obtain a formula for the eigenvalues of $S^2$ as rational functions of $\Lambda$.  

The organization of the paper is as follows. In Section 2 we recall the results of Schaumann on pivotal structures matched to 
module categories and extend them to the non-semisimple case. We also give two other equivalent definitions of this notion. In Section 3 we prove the formula for the characteristic polynomial of $S^2$ in the case when the dimension character occurs at most once in 
${\rm Gr}(\M)\otimes \k$, and use it to obtain a formula for the characteristic polynomial of $S^2$ for arbitrary semisimple $\C,\M$ when 
${\rm char}(\k)=0$. In Section 4 we consider the example of dynamical quantum groups and compute the eigenvalues of $S^2$ in this example as rational functions of the parameters. 

We note that the notion of a module trace has been generalized by G. Schaumann to the case when $\C$ is a finite tensor category 
and $\M$ an exact (not necessarily semisimple) $\C$-module category, \cite{Sch2}; namely, he introduced the notion of an inner product module category 
(\cite{Sch2}, Definition 5.2). Specifically, $\M$ is an inner product module category if and only if the pivotal structure on $\C$ extends to a 
pivotal structure on the category $\C_{\C\oplus \M}^{*\rm op}$. G. Schaumann showed that this generalization coincides with ours when $\C$ is unimodular (i.e., the distinguished invertible object of $\C$ is $\bold 1$), up to switching the pivotal structure to the conjugate one, and also when $\C_\M^*$ is unimodular, 
but they differ in general. The paper contains an appendix by G. Schaumann explaining the exact 
relationship between these two generalizations. 

{\bf Acknowledgements.} P. Etingof thanks D. Nikshych and V. Ostrik for useful discussions and comments on the draft of this paper,
and in particular to V. Ostrik for providing the reference \cite{Sch1}. He is also very grateful to G. Schaumann for comments on the draft of the paper and for contributing an appendix. The work of P. Etingof was partially supported by the NSF grant DMS-1502244.  The work of G. Schaumann was partially supported by the stand-alone project P 27513-N27 of the Austrian Science Fund.

\section{Pivotal structures matched to a module category} 

We start with giving a straightforward generalization of the results of \cite{Sch1} to the setting of non-semisimple 
tensor categories in any characteristic. So this subsection is essentially an exposition of the results of \cite{Sch1} 
in a slightly more general setting using a somewhat different approach. 

\subsection{Definitions and examples of matched pivotal structures} 

Let $\C$ be a finite tensor category over an algebraically closed field $\k$. Let $\M$ be a finite semisimple indecomposable module category over $\C$.
In other words, $\M$ is a finite semisimple category and we are given a tensor functor $F: \C\to \End \M$. Let $\C_\M^*={\rm Fun}_\C(\M,\M)$ be the dual category. 
(For basics on tensor categories, module categories and dual categories, see \cite{EGNO}). 

Let $a:{\rm Id}\to **$ be a tensor isomorphism, i.e., a pivotal structure on $\C$. Then we can canonically define an invertible object 
$\chi(a,\M)\in \C_\M^*$ as follows. For $V\in \End \M$, we may naturally identify $V$ with $V^{**}$, so for $X\in \C$ we have a natural isomorphism $F(X^{**})\cong F(X)^{**}\cong F(X)$. Hence we have a canonical identification $X^{**}\otimes M\cong X\otimes M$, $M\in \M$. Thus the pivotal structure $a$ 
defines a functorial automorphism 
$$
a\otimes 1: X\otimes M\to X^{**}\otimes M=X\otimes M,
$$ 
i.e., an invertible module functor $\M\to \M$ (with the underlying additive functor being the identity). But such a functor is nothing but an invertible object of the dual category $\C_\M^*$, which we denote by $\chi(a,\M)$. 

\begin{definition}\label{matc} We say that $a$ is matched to $\M$ if $\chi(a,\M)=\bold 1$. 
\end{definition}  

This definition can be extended to not necessarily indecomposable categories in an obvious way. 
Namely, if $\M=\oplus_i \M_i$ and $\M_i$ are indecomposable then $a$ is matched to $\M$ if and only 
if it is matched to each $\M_i$.  

One can reformulate this definition in terms of weak Hopf algebras. (For basics on weak Hopf algebras, see \cite{N1} and references therein). 
Namely, let $A$ be a commutative semisimple algebra such that $\M=A-{\rm mod}$, so that $\End \M=A-{\rm bimod}$. 
Let $F: \C\to A-{\rm bimod}$ be the corresponding tensor functor.
Recall that to $\C,\M$ we can associate the biconnected regular Frobenius weak Hopf algebra 
$H:=\End_\k F={\rm End}\overline{F}$ with bases $A_s,A_t\cong A$, and $\C=\Rep H$ (here $\overline{F}$ 
is the composition of $F$ with the forgetful functor $A-{\rm bimod}\to \Vec$). 
A pivotal structure $a$ on $\C$ then gives rise to a grouplike element 
$g_a\in H$ such that $g_a x g_a^{-1}=S^2(x)$, $x\in H$, 
called the {\it pivotal} element. Such an element defines 
an invertible $H^*$-module (\cite{N1}, Section 4), 
which is exactly the object $\chi(a,\M)\in \Rep H^*_{\rm op}=\C_\M^*$ (\cite{ENO}, p.590; \cite{O}, Theorem 4.2; \cite{EGNO}, Example 7.12.26).
\footnote{Note that in Example 7.12.26 of \cite{EGNO}, there is a misprint: 
$H^*$ should be $H^*_{\rm op}$ or $H^{*{\rm cop}}$.} 
Thus, we have the following proposition, giving an equivalent definition 
of a pivotal structure matched to $\M$: 

\begin{proposition}\label{weakhopf} The pivotal structure $a$ is matched to $\M$ if and only if the corresponding pivotal element $g_a$ is a trivial grouplike element of $H$, i.e., 
$g_a=yS(y)^{-1}$ where $y\in A_s$ is invertible. 
\end{proposition}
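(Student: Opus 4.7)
Plan: The paragraph preceding the proposition already identifies $\chi(a,\M) \in \C_\M^* = \Rep H^*_{\rm op}$ with the invertible $H^*$-module $[g_a]$ determined by the grouplike element $g_a \in H$, via the construction of \cite{N1}, Section 4. Hence the proposition reduces to the following general statement about weak Hopf algebras: the invertible $H^*$-module $[g]$ associated to a grouplike $g \in H$ is isomorphic to the unit object $\bold 1 \in \C_\M^*$ if and only if $g = yS(y)^{-1}$ for some invertible $y \in A_s$. Equivalently, one shows that $g \mapsto [g]$ is a group homomorphism $G(H) \to \operatorname{Pic}(\C_\M^*)$ whose kernel is exactly the subgroup of trivial grouplikes.

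For the "if" direction I would first verify that $y S(y)^{-1}$ is indeed grouplike for invertible $y \in A_s$; this is a direct computation using the standard formulas for the comultiplication and counit on the base of a weak Hopf algebra together with the fact that $S$ restricts to an anti-isomorphism $A_s \to A_t$. I would then exhibit the isomorphism $[y S(y)^{-1}] \to \bold 1$ explicitly as multiplication by $y$, checking intertwining with the $H^*$-action (equivalently, the $H$-coaction on the underlying base module) via the compatibility of $\Delta$ with the base. For the converse, given an isomorphism $[g] \xrightarrow{\sim} \bold 1$, I would note that both modules are free of rank one over the base, so the isomorphism is multiplication by an invertible element $y$; then intertwining with the $H^*$-action forces $y \in A_s$ and rearranges to $g = y S(y)^{-1}$.

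The main obstacle is not conceptual but the weak-Hopf bookkeeping: one must carefully distinguish the source and target counital subalgebras $A_s,A_t$ and keep track of the conventions for $H^*$ versus $H^*_{\rm op}$ (as flagged by the footnote about the misprint in \cite{EGNO}). In essence the result is the weak-Hopf analogue of the obvious fact for ordinary Hopf algebras that a grouplike defines the trivial character iff it equals $\bold 1$; the subtlety is that in the weak setting the "trivial" grouplikes form the coset $\{y S(y)^{-1} : y \in A_s^\times\}$ rather than just $\{\bold 1_H\}$, and it is this coset that maps to the unit of the Picard group of $\C_\M^*$.
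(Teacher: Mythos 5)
Your proposal is correct and follows essentially the same route as the paper. The paper gives no separate proof: it invokes, in the paragraph preceding the proposition, the identification of $\chi(a,\M)$ with the invertible $H^*$-module attached to the grouplike $g_a$ (citing \cite{N1} Section 4, \cite{ENO} p.~590, \cite{O} Theorem 4.2, \cite{EGNO} Example 7.12.26), and then reads off the proposition from the fact — also from \cite{N1} — that this assignment $G(H)\to\mathrm{Pic}(\C_\M^*)$ has kernel exactly the subgroup of trivial grouplikes $\{yS(y)^{-1}: y\in A_s^\times\}$; your plan simply makes that reduction explicit and outlines the verification that the paper takes as already established.
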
  

Another definition of a matched pivotal structure (in fact, several equivalent definitions) is given by G. Schaumann in \cite{Sch1}.
Namely, fix a collection of numbers $\bold m=(m_i,i\in I)$ in $\k$, and for any $M\in \M$ and endomorphism $f: M\to M$, 
define its trace
$$
{\rm Tr}^{\bold m}_M(f):=\sum_{i\in I}m_i {\rm Tr}(f|_{\Hom(M_i,M)}).
$$

\begin{definition} (\cite{Sch1}) 
The trace ${\rm Tr}^{\bold m}$ is said to be a 
{\it module trace} on $\M$ with respect to a pivotal structure $a$ on $\C$
if for any $X\in \C$, $M\in M$ and morphism $f: X\otimes M\to X\otimes M$ 
one has 
$$
\Tr^{\bold m}_{X\otimes M}(f)=\Tr^{\bold m}_M(\Tr_X(f)),
$$
where ${\rm Tr}_X(f): M\to M$ is the composition 
$$
M\xrightarrow{{\rm coev}_X\otimes 1}X\otimes X^*\otimes M\xrightarrow{a_X\otimes f} X^{**}\otimes X^*\otimes M\xrightarrow{{\rm ev}_{X^*}\otimes 1} M.
$$
\end{definition} 

\begin{proposition}\label{scha1} The pivotal structure $a$ is matched to $\M$ if and only if $\M$ admits a nonzero module trace with respect to $a$. In this case, the module trace is unique up to scaling. 
\end{proposition}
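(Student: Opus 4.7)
The plan is to reduce both sides of the equivalence to the \emph{same} system of scalar equations on the simples of $\M$, exploiting semisimplicity of $\M$ to parametrize natural self-transformations of $\Id_\M$. Since $\M$ is indecomposable semisimple, any natural endotransformation of $\Id_\M$ is a tuple $(\mu_i)\in\k^I$ acting as $\mu_i\,\id$ on $M_i$, and any functional $\Tr^{\mathbf m}$ is parametrized by $(m_i)\in\k^I$. For each $X\in\C$ I would decompose $X\otimes M_i=\bigoplus_j V^X_{ij}\otimes M_j$ with $V^X_{ij}:=\Hom(M_j,X\otimes M_i)$, and record the endomorphism $\tau_{X,M_i}:=a_X\otimes\id_{M_i}$ (viewed as an endomorphism of $X\otimes M_i$ via the canonical identification $X^{**}\otimes M=X\otimes M$) by its restriction $T^X_{ij}\in\End(V^X_{ij})$ on each multiplicity space.

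The matched condition then unpacks as follows. A module natural iso $\eta:\mathbf 1\to\chi(a,\M)$ with $\eta_{M_i}=\mu_i\,\id$ must make the module-functor compatibility square commute, which upon reading off on each summand $V^X_{ij}\otimes M_j$ forces $T^X_{ij}=(\mu_i/\mu_j)\,\id_{V^X_{ij}}$. Thus ``matched'' is equivalent to the existence of a nowhere-zero $(\mu_i)$ such that every $T^X_{ij}$ is a scalar with these prescribed ratios.

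For the other direction, the key technical step is to compute $\Tr_X(f)$ on each isotypic component. I claim that for $\phi\in\End(V^X_{ij})$ extended by zero in $\End(X\otimes M_i)$, one has $\Tr_X(f)=\Tr(T^X_{ij}\phi)\cdot\id_{M_i}$; this comes from unwinding the composition $(\ev_{X^*}\otimes 1)\circ(a_X\otimes f)\circ(\coev_X\otimes 1)$ through the isotypic decomposition, noting that only the $j$-th component of $f$ contributes and that the only data from $a_X$ entering the output scalar is its restriction $T^X_{ij}$. Plugging this into the module-trace axiom yields
\[
m_j\,\Tr(\phi)=m_i\,\Tr(T^X_{ij}\phi)
\]
for all $X,i,j$ and all $\phi\in\End(V^X_{ij})$, which again forces $T^X_{ij}$ to be scalar and equal to $m_j/m_i$. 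The bijection $\mu_i\leftrightarrow 1/m_i$ (up to an overall scalar) therefore matches the two systems of equations.

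Uniqueness up to scaling follows because $\End_{\C^*_\M}(\mathbf 1)=\k$ by indecomposability of $\M$, so $\eta$ is unique up to a scalar, hence so is $\mathbf m$. Non-vanishing of every $m_i$ when $\mathbf m\ne 0$ follows from the same indecomposability: if some $m_{i_0}=0$, then taking $\phi=\id$ in the equations gives $m_j=0$ for every $j$ with $\Hom(M_j,X\otimes M_{i_0})\ne 0$ for some $X$, and iterating propagates the vanishing to all of $I$. The main obstacle is the explicit isotypic computation $\Tr_X(f)=\Tr(T^X_{ij}\phi)\,\id_{M_i}$: carrying $\coev_X$, $a_X$, and $\ev_{X^*}$ through the decomposition and reading off the scalar on $M_i$ requires careful diagrammatic bookkeeping, especially since $X$ need not be simple. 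As an alternative route, the weak Hopf algebra translation of Proposition~\ref{weakhopf} recasts the same system as the invariance condition for a trace on $H$ twisted by the pivotal element $g_a$, along the lines of \cite{Sch1}.
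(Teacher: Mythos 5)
Your proof is essentially correct and rests on the same core idea as the paper's: decompose $X\otimes M_i=\bigoplus_j V^X_{ij}\otimes M_j$, record the restriction of the pivotal data on each multiplicity space $V^X_{ij}$, and show that both ``matched'' and ``module trace'' reduce to the same scalar equations $T^X_{ij}=(m_j/m_i)\,\id$ on these spaces. The only real difference is organizational and linguistic: the paper proves the two implications separately and passes through Proposition~\ref{weakhopf}, so that the key identity is phrased as $\Tr_X(f)=\Tr(f\circ(g_a\otimes 1))$ with $g_a$ the pivotal element of the weak Hopf algebra, while you stay entirely inside the module-category picture and unify both implications into the single reduction. Both arguments leave the same technical heart unverified in detail --- your formula $\Tr_X(\phi\otimes\id_{M_j})=\Tr(T^X_{ij}\phi)\,\id_{M_i}$ is exactly the restriction of the paper's $\Tr_X(f)=\Tr(f\circ(g_a\otimes 1))$ to an isotypic block, and you explicitly flag it as needing diagrammatic bookkeeping --- so neither route is more rigorous on that point. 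The uniqueness and nonvanishing arguments coincide (nonvanishing propagates along the indecomposable module category). One small sign check: with the convention $\eta:\mathbf 1\to\chi(a,\M)$ the module-functor square gives $T^X_{ij}=(\mu_j/\mu_i)\id$, so the dictionary is $\mu_i\leftrightarrow m_i$ rather than $\mu_i\leftrightarrow 1/m_i$; this is a harmless convention (direction of $\eta$) and does not affect the argument.
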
 

\begin{proof} Suppose that $a$ is matched to $M$, and let $y\in A=A_s$ be such that $g_a=yS(y)^{-1}$ in the corresponding weak Hopf algebra $H$ (it exists by Proposition \ref{weakhopf}). 
Recall that $A=\oplus_{i\in I}\k$, and let $m_i\in \k$ be the projection of $y$ onto the $i$-th summand. We claim that $\Tr^{\bold m}$ 
is a module trace. Indeed, let $X\in \C$ and $f: X\otimes M_i\to X\otimes M_i$ be a morphism. Then we have 
$$
\Tr^{\bold m}(f)=\sum_{j\in I}\beta_{ij}m_j,
$$ 
where 
$$
\beta_{ij}:={\rm Tr}(f|_{\Hom(M_j,X\otimes M_i)}).
$$
It is clear that it suffices to check the module trace property for simple $M$, 
so it is enough to check that 
$$
\Tr_X(f)=m_i^{-1}\sum_{j\in I}\beta_{ij}m_j.
$$ 
Let $F(X)$ denote the $A$-bimodule corresponding to $X$. Then 
$$
F(X)=\oplus_{i,j\in I}\Hom(M_j,X\otimes M_i).
$$
Thus 
$$
X\otimes M_i=\oplus_{j\in I}\Hom(M_j,X\otimes M_i)\otimes M_j
$$
and $g_a=yS(y)^{-1}$ acts on the summand $\Hom(M_j,X\otimes M_i)\otimes M_j$ by multiplication by $m_jm_i^{-1}$. 
Thus, 
$$
\Tr_X(f)=\Tr(f\circ (g\otimes 1))=m_i^{-1}\sum_{j\in I}\beta_{ij}m_j,
$$
as desired. 

Conversely, suppose that ${\rm Tr}^{\bold m}$ is a nonzero module trace on $\M$. 
We claim that $m_i\ne 0$ for all $i$. Indeed, if $m_i=0$ for some $i$ then for any $X\in \C$, the trace 
of any endomorphism of $X\otimes M_i$ is zero. But any object of $\M$ is a direct summand of $X\otimes M_i$ for some $X$, which implies that $m_j=0$ for all $j$. 

Thus, we have an invertible element $y\in A=A_s$ whose projection to the $i$-th summand of $A$ is $m_i$. Let $g=yS(y)^{-1}\in H$. 
It is clear that $g_a$ and $g$ act naturally on $\Hom(M_j,X\otimes M_i)$ for each $i,j$, and
the module trace property implies that 
$$
\Tr((g_a-g)\circ f|_{\Hom(M_j,X\otimes M_i)})=0
$$
for any $X\in \C$ and $f: X\otimes M_i\to X\otimes M_i$. This implies that $g_a-g$ acts by zero on $\Hom(M_j,X\otimes M_i)$ for all $X,i,j$, i.e., $g_a=g=yS(y)^{-1}$, as desired.   

The uniqueness of the module trace up to scaling (when it exists) follows from the fact proved above that for a module trace, $m_i\ne 0$ for all $i$
(as in \cite{Sch1}). 
\end{proof} 

Now observe that any collection of numbers $m_i\ne 0$ gives rise to a functorial isomorphism $\phi^{\bold m}_{MN}:\Hom(M,N)\cong \Hom(N,M)^*$ for $M,N\in \M$ 
defined by the pairing $\Hom(M,N)\otimes \Hom(N,M)\to \k$ given by 
$$
\langle f, h\rangle:=\Tr^{\bold m}_N(f\circ h) 
$$
(note that $\Tr^{\bold m}_N(f\circ h)=\Tr^{\bold m}_M(h\circ f)$). Moreover, any functorial isomorphism $\Hom(M,N)\cong \Hom(N,M)^*$ is uniquely obtained in this way. 

\begin{definition} A functorial isomorphism 
$$
\phi_{MN}: \Hom(M,N)\to \Hom(N,M)^*
$$ 
is said to be $\C$-balanced (for a given pivotal structure $a: X\to X^{**}$ on $\C$) if for each $X\in \C$ the composite isomorphism
$$
\Hom(X\otimes M,N)\xrightarrow{\phi_{X\otimes M,N}}\Hom(N,X\otimes M)^*\cong \Hom(X^*\otimes N,M)^*
$$
$$
\xrightarrow{\phi_{M,X^*\otimes N}^{-1}} \Hom(M,X^*\otimes N)\cong \Hom(X^{**}\otimes M,N)\xrightarrow{\circ(a_X\otimes 1)} \Hom(X\otimes M,N)
$$
is the identity. 
\end{definition} 

\begin{proposition}\label{scha2} (\cite{Sch1}, Theorem 4.2) The isomorphism $\phi^{\bold m}$ is $\C$-balanced if and only if $\Tr^{\bold m}$ is a module trace. 
\end{proposition}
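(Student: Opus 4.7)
The plan is to show that both the $\C$-balanced condition and the module trace property are equivalent to the same family of scalar identities indexed by quadruples $(X,M,N)$ with $f\colon X\otimes M\to N$ and $h\colon N\to X\otimes M$, and hence to each other. The key tool is the reformulation of ``a composite isomorphism equals the identity'' as ``two bilinear pairings coincide.''

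First, I would unpack the $\C$-balanced condition as a pairing statement. By definition $\phi^{\bold m}_{X\otimes M,N}(f)$ is the linear functional on $\Hom(N,X\otimes M)$ sending $h\mapsto\Tr^{\bold m}_N(f\circ h)$. The composite map in the definition of $\C$-balanced being the identity on $\Hom(X\otimes M,N)$ is therefore equivalent to the statement that for all $f$ and all $h$, the pairing
$$
\langle f,h\rangle_1 := \Tr^{\bold m}_N(f\circ h)
$$
agrees with the transported pairing $\langle f,h\rangle_2$ obtained by applying the duality isomorphisms $\Hom(X\otimes M,N)\cong\Hom(M,X^*\otimes N)$ and $\Hom(N,X\otimes M)\cong\Hom(X^*\otimes N,M)$, inserting the pivotal structure $a_X$ where dictated, and pairing via $\phi^{\bold m}_{M,X^*\otimes N}$.

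Second, I would compute $\langle f,h\rangle_2$ explicitly. Writing $f^\flat\colon M\to X^*\otimes N$ and $h^\sharp\colon X^*\otimes N\to M$ for the mates of $f$ and $h$ under the standard adjunction $(X\otimes-,X^*\otimes-)$ built from $\mathrm{ev}_X,\mathrm{coev}_X$, a direct calculation using the zigzag identities shows
$$
h^\sharp\circ f^\flat \;=\; (\mathrm{ev}_{X^*}\otimes 1)\circ(a_X\otimes h\circ f)\circ(\mathrm{coev}_X\otimes 1) \;=\; \Tr_X(h\circ f),
$$
so that $\langle f,h\rangle_2=\Tr^{\bold m}_M(h^\sharp\circ f^\flat)=\Tr^{\bold m}_M(\Tr_X(h\circ f))$. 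This is the step I expect to be the main obstacle, since it requires careful bookkeeping of the pivotal structure against the duality morphisms; the payoff is that the transported pairing is exactly the right-hand side of the module trace identity applied to $h\circ f\in\End(X\otimes M)$.

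Third, using the cyclicity $\Tr^{\bold m}_N(f\circ h)=\Tr^{\bold m}_{X\otimes M}(h\circ f)$, which is immediate from the definition of $\Tr^{\bold m}$ as a weighted sum of ordinary linear traces on the multiplicity spaces $\Hom(M_i,-)$, the equality $\langle f,h\rangle_1=\langle f,h\rangle_2$ becomes
$$
\Tr^{\bold m}_{X\otimes M}(h\circ f) \;=\; \Tr^{\bold m}_M(\Tr_X(h\circ f)).
$$
Thus $\phi^{\bold m}$ is $\C$-balanced iff this identity holds for all $X,M,N,f,h$. Specializing to $N=X\otimes M$ and $h=\id_{X\otimes M}$ recovers the module trace property for an arbitrary endomorphism of $X\otimes M$; conversely, any such endomorphism $g$ can be written as $h\circ f$ (take $f=g$, $h=\id$ with $N=X\otimes M$), so the module trace property implies the identity for all $(f,h)$. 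Since any object of $\M$ is a summand of some $X\otimes M$, these suffice, and the two conditions are equivalent.
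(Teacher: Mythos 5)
Your argument is correct and is essentially the same as the one in Schaumann's Theorem 4.2, which the paper cites rather than reproduces: reinterpret the $\C$-balanced condition as an equality of bilinear pairings, use the mate computation to identify the transported pairing with $\Tr^{\bold m}_M(\Tr_X(h\circ f))$, and invoke the cyclicity of $\Tr^{\bold m}$ to reduce to the module trace identity. The one step you assert without full verification is the zigzag identity $h^{\sharp}\circ f^{\flat}=\Tr_X(h\circ f)$, but that is exactly the bookkeeping carried out in \cite{Sch1}, so the proposal is complete modulo that standard check.
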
 

\begin{proof} The proof is the same as the proof of Theorem 4.2 in \cite{Sch1}.  
\end{proof} 

Thus, thanks to Propositions \ref{weakhopf}, \ref{scha1}, \ref{scha2} we have four equivalent definitions of a pivotal structure matched to a module category. 

\begin{example} 1. Let $\C=\Rep H$, where $H$ is a finite dimensional Hopf algebra over $\k$, 
and $\M=\Vec$, so that $F$ is the corresponding fiber functor. Then
a pivotal structure $a$ is defined by a grouplike element $g_a\in H$ such that $g_a xg_a^{-1}=S^2(x)$, and 
$\C_\M^*=\Rep H^*_{\rm op}$, so $\chi(a,\M)\in H$ is a grouplike element. 
Recall that an $H^*_{\rm op}$-module $V$ gives rise to the $\C$-linear functor 
$\Vec\to \Vec$ given by $F_V(\k)=V$ with  
the map $X\otimes V\to X\otimes V$ for $X\in \C$ 
defined by $x\otimes v\mapsto \rho(v)(x\otimes 1)$, where 
$\rho:V\to H\otimes V$ is the left $H$-comodule structure 
attached to the right $H^*$-module structure on $V$. 
Therefore, we have $\chi(a,\M)=g_a$. Thus, $a$ is matched to 
$\M$ if and only if $g_a=1$. In this case $S^2={\rm Id}$, so $a$ is the canonical 
spherical structure, and $\C$ is semisimple if ${\rm char}(\k)=0$ by the Larson-Radford theorem. 
In positive characteristic $\C$ need not be semisimple (e.g., $H$ can be a group algebra).   

2. If $\M=\C$ for $\C$ semisimple then it is easy to see that $\chi(a,\M)=\bold 1$ for any pivotal structure $a$, so all pivotal structures are automatically matched to $\M$ (and the corresponding module trace is just the left trace in $\C$). 

3. If $\C$ is semisimple and pseudounitary over $\k=\Bbb C$ and $a$ is the canonical spherical structure on $\C$, then $a$ 
is matched to $\M$, since $S^2$ is given by conjugation by a trivial grouplike element, as shown in \cite{N2}. 

4. (\cite{Sch1}, Example 3.13). Let $\Vec_G^\omega$ be the category of $G$-graded finite dimensional vector spaces over $\k$ with 
associativity defined by a 3-cocycle $\omega$ on $G$. Then pivotal structures on $\C$ correspond to characters $\kappa: G\to \k^\times$. 
Also, indecomposable module categories over $\C$ are $\M(H,\psi)$, where $H\subset G$ is a subgroup and $\psi$ a 2-cochain on $H$ such that $d\psi=\omega|_H$
(\cite{EGNO}, Example 9.7.2). It is not hard to show that $a$ is matched to $\M(H,\psi)$ if and only if $\kappa|_H=1$ (as shown in \cite{Sch1}). 

5. (\cite{Sch1}, Example 3.14). Let $\C$ be a fusion category, and $\widetilde{\C}$ be the pivotalization of $\C$,  \cite{EGNO}, Definition 7.21.9. 
Thus we have a forgetful tensor functor $\widetilde{\C}\to \C$, hence $\C$ becomes a $\widetilde{\C}$-module category, and the dual category is 
the semidirect product $\Vec_{\Bbb Z/2}\ltimes \C$. Let $a$ be the canonical pivotal structure on $\widetilde{\C}$. Then $\chi(a,\C)$ 
is the invertible object of $\Vec_{\Bbb Z/2}$ corresponding to the generator $1\in \Bbb Z/2$. Thus, $a$ is not matched to $\C$. 

6. If $F:\C\to \D$ is a pivotal tensor functor between pivotal finite tensor categories and the pivotal structure of $\D$ is matched to $\M$ then 
the pivotal structure of $\C$ is also matched to $\M$. 
\end{example} 

\subsection{Properties of $m_i$}

Let $X_r$ be the simple objects of $\C$ and $N_r$ be the matrix of multiplication by $X_r$ in the basis $(M_i)$. 
Assume that $\C$ is equipped with a pivotal structure $a$ matched to $\M$, and let $\bold m=(m_i)$ be the corresponding vector. 

\begin{proposition}\label{eigenvec} (\cite{Sch1}) One has
$$
N_r\bold m=\dim(X_r)\bold m.
$$
\end{proposition}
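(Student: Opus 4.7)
The plan is to apply the module trace property of Proposition \ref{scha1} to the identity morphism of $X_r \otimes M_i$ and read off both sides.

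First I would compute the right-hand side of the module trace identity for $f=\id_{X_r\otimes M_i}$. The partial trace $\Tr_{X_r}(\id_{X_r\otimes M_i})$ is, by definition, the composition
$$
M_i \xrightarrow{\coev_{X_r}\otimes 1}X_r\otimes X_r^*\otimes M_i\xrightarrow{a_{X_r}\otimes 1\otimes 1} X_r^{**}\otimes X_r^*\otimes M_i\xrightarrow{\ev_{X_r^*}\otimes 1} M_i,
$$
which factors through $M_i$ as $(\ev_{X_r^*}\circ (a_{X_r}\otimes 1)\circ \coev_{X_r})\otimes \id_{M_i} = \dim(X_r)\cdot \id_{M_i}$, by the very definition of the pivotal dimension. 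Hence
$$
\Tr^{\mathbf m}_{M_i}(\Tr_{X_r}(\id_{X_r\otimes M_i})) = \dim(X_r)\,\Tr^{\mathbf m}_{M_i}(\id_{M_i}) = \dim(X_r)\,m_i,
$$
where in the last step I use that since the $M_j$ are the simples and $\k$ is algebraically closed, $\Hom(M_j,M_i)=\k\,\delta_{ij}$, so only the $j=i$ summand survives.

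Next I would compute the left-hand side. By definition of $\Tr^{\mathbf m}$,
$$
\Tr^{\mathbf m}_{X_r\otimes M_i}(\id_{X_r\otimes M_i}) = \sum_{j\in I} m_j\,\Tr\bigl(\id\big|_{\Hom(M_j,X_r\otimes M_i)}\bigr) = \sum_{j\in I} N_{ri}^j\, m_j,
$$
since the trace of the identity on a vector space equals its dimension, and $N_{ri}^j=\dim\Hom(M_j,X_r\otimes M_i)$.

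Equating the two expressions via the module trace property yields $\sum_j N_{ri}^j m_j = \dim(X_r)\,m_i$ for each $i\in I$, which is precisely $N_r\mathbf m = \dim(X_r)\mathbf m$. There is no real obstacle here: the argument is essentially a direct unwinding of definitions, and the only input beyond the definition of $\Tr^{\mathbf m}$ is the identification of the partial trace of the identity with the pivotal dimension, which is a standard calculation in any pivotal category and extends verbatim to the module setting because the action of $\C$ on $\M$ is $\k$-linear.
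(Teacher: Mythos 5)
Your proof is correct and follows exactly the paper's approach: apply the module trace identity to $\id_{X_r\otimes M_i}$, identify the partial trace with $\dim(X_r)\cdot\id_{M_i}$, and expand $\Tr^{\mathbf m}$ on both sides. The extra detail you supply in unwinding the definitions is sound and matches what the paper leaves implicit.
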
  

\begin{proof} 
Consider the module trace condition for the identity morphism:  
$$
\Tr^{\bold m}_{X_r\otimes M_i}({\rm Id})=\Tr^{\bold m}_{M_i}({\rm Tr}_{X_r}({\rm Id}))
$$
for $X\in \C, M\in \M$. This yields
$$
\sum_{j\in I}N_{ri}^jm_j=\dim(X_r)m_i,
$$
or in the matrix form $N_r\bold m=\dim(X_r)\bold m$, as desired. 
\end{proof} 

Consider now the Grothendieck group ${\rm Gr}(\M)\otimes \k$ as a module over ${\rm Gr}(\C)\otimes \k$.
Note that in characteristic zero this is a semisimple module (even for non-semisimple $\C$), since for $\k=\Bbb C$ it has a positive definite invariant Hermitian form in which $M_i$ are orthonormal. Using Proposition \ref{eigenvec}, we obtain the following proposition. 

\begin{proposition}\label{mult1} 
If the pivotal structure $a$ is matched to $\M$ then the dimension character $X\mapsto \dim(X)$ occurs in the decomposition of 
the ${\rm Gr}(\C)\otimes \k$-module ${\rm Gr}(\M)\otimes \k$, and $\bold m$ is an eigenvector corresponding to this character. 
Moreover, if this character occurs with multiplicity $1$, then this determines $\bold m$ up to a scalar.
\end{proposition}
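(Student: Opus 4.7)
The plan is to read this off directly from Proposition \ref{eigenvec} together with the fact, already observed in the proof of Proposition \ref{scha1}, that the components $m_i$ of $\bold m$ are all nonzero when $a$ is matched to $\M$.

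First, I would recall that by Proposition \ref{eigenvec} we have $N_r \bold m = \dim(X_r)\, \bold m$ for every simple object $X_r$ of $\C$. Since the classes $[X_r]$ span ${\rm Gr}(\C)\otimes \k$ and their actions on ${\rm Gr}(\M)\otimes \k$ are given by the matrices $N_r$, this says precisely that $\bold m$, viewed as an element of ${\rm Gr}(\M)\otimes \k$ via $\bold m = \sum_i m_i [M_i]$, is a common eigenvector on which ${\rm Gr}(\C)\otimes \k$ acts by the character $X \mapsto \dim(X)$.

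Next I would observe that $\bold m \neq 0$: indeed, as shown in the proof of Proposition \ref{scha1}, the vector $\bold m$ arising from a matched pivotal structure has all components $m_i$ nonzero (it comes from an invertible element $y \in A$). Therefore the one-dimensional subspace $\k \bold m \subset {\rm Gr}(\M)\otimes \k$ realizes the dimension character as a subrepresentation of ${\rm Gr}(\M)\otimes \k$, which is exactly the claim that this character occurs in the decomposition.

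Finally, for the last assertion, suppose the dimension character occurs with multiplicity $1$ in ${\rm Gr}(\M)\otimes \k$. Then the corresponding isotypic component is one-dimensional, and any other eigenvector for the character must lie in this component. Since $\bold m$ is a nonzero element of it, any such eigenvector is a scalar multiple of $\bold m$. There is no real obstacle here; the only step that requires care is making sure the nonvanishing of all $m_i$ is invoked at the right place so that $\bold m$ genuinely contributes to the isotypic decomposition rather than being trivially zero.
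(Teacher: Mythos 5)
Your proof is correct and is essentially the same as the paper's, which in fact gives no separate proof at all (the proposition is simply stated as a direct consequence of Proposition \ref{eigenvec}, $N_r\bold m=\dim(X_r)\bold m$, plus the observation from the proof of Proposition \ref{scha1} that all $m_i\neq 0$). The only point worth flagging is that your phrase "isotypic component" tacitly appeals to the semisimplicity of $\Gr(\M)\otimes\k$ as a $\Gr(\C)\otimes\k$-module — which the paper records just above the statement for $\mathrm{char}(\k)=0$ — but the uniqueness actually holds more generally: if a one-dimensional character appears with multiplicity $1$ among composition factors, its eigenspace is at most one-dimensional, since a two-dimensional space of common eigenvectors would force that character to appear at least twice.
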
 

\begin{example} If $\C$ is pseudounitary then the dimension character occurs in ${\rm Gr}(\M)\otimes \k$ 
with multiplicity $1$ by the Frobenius-Perron theorem, so $m_i={\rm FPdim}(M_i)$, as shown in \cite{N2}. 
\end{example} 

\subsection{Tensor isomorphisms matched to a module category} 
Let us generalize Definition \ref{matc} to more general tensor isomorphisms. 
Let $r\in \Bbb Z$ and $b_X: X\to X^{(2r)}$ be a functorial tensor isomorphism (where $X^{(2r)}$ denotes the $2r$-th dual of $X$). Then we can define an invertible object $\chi(b,\M)\in \C_\M^*$ similarly to the above.

\begin{definition} We say that $b$ is matched to $\M$ if $\chi(b,\M)=\bold 1$. 
\end{definition} 

In the language of weak Hopf algebras, this means that $S^{2r}(x)=h_bxh_b^{-1}$, where $h_b\in H$ is a grouplike element, and the matched condition means that $h_b=zS(z)^{-1}$ is a trivial grouplike element (for some invertible $z\in A_s$).  
 
It is clear that 
$$
\chi(b_1\circ b_2,\M)=\chi(b_1,\M)\otimes \chi(b_2,\M)=\chi(b_2,\M)\otimes \chi(b_1,\M)
$$ 
(as $b_1\circ b_2=b_2\circ b_1$) so if $b_1,b_2$ are matched to $\M$ then so is $b_1\circ b_2$. 

\begin{example}\label{vost} 1. (V. Ostrik) Let $b_X: X\to X$ be a tensor automorphism of the identity 
functor of $\C$ (i.e., $r=0$). 
Then $b$ canonically defines an invertible object $Z_b$ in the Drinfeld center ${\mathcal Z}(\C)$ 
of $\C$ which is $\bold 1$ as an object of $\C$ (so $b_X$ is the corresponding isomorphism between $Z_b\otimes X$ and $X\otimes Z_b$). It follows from the definition that $\chi(b,\M)$ 
is the image of $Z_b$ in $\C_\M^*$. 

2. It is easy to show that $b$ is matched to $\M$ if and only if there is an automorphism $b_\M$ of the identity functor of $\M$
compatible with $b$ (which is necessarily unique and also has order $d$; in particular, $d\ne 0$ in $\k$). In other words, $b$ defines a faithful 
$\Bbb Z/d$-grading on $\C$, and $b_\M$ defines a compatible faithful $\Bbb Z/d$-grading on $\M$. 
\end{example} 

Now let $a,a'$ be two pivotal structures on $\C$ matched to $\M$. Then $a'=a\circ b$, where $b$ is a tensor automorphism 
of the identity fiunctor of $\C$ matched to $\M$. Thus there is a compatible automorphism $b_\M$ 
of the identity functor of $\M$. Let $b_i$ be the scalar by which $b_\M$ acts on $M_i$ (a root of unity of order dividing $d$). Then 
the numbers $m_i'$ corresponding to the pivotal structure $a'$ are related to the numbers $m_i$ corresponding to $a$
by the formula $m_i'=m_ib_i$.  

\begin{example} Let $\C$ be a finite tensor category and $\M$ be a semisimple indecomposable $\C$-module category. 
Let $D$ be the dual distinguished invertible object of $\C$ and $\iota: X\to D^{-1}\otimes X^{****}\otimes D$ be the categorical Radford isomorphism 
(see \cite{EGNO}, Theorem 7.19.1). Let $d$ be the least common multiple of the order order of $D$ and the order of the (dual) distinguished invertible object 
of $\C_\M^*$. Then the tensor isomorphism $b:=\iota^d: X\to X^{(4d)}$ is matched to $\M$. Indeed, by the Radford $S^4$ formula for weak Hopf algebras (\cite{N1}), the operator $S^{4d}$   
is given by conjugation by a trivial grouplike element. 

In particular, if $\C$ is semisimple then $d=1$ (\cite{N1}), so $\iota$ is matched to $\M$. 
In other words, in a semisimple weak Hopf algebra $S^4(x)=hxh^{-1}$ where $h$ is a trivial grouplike element, 
as shown in \cite{N2}. Thus, if $a$ is a spherical structure then $\chi(a,M)$ has order $\le 2$
(as $a^2=\iota$).  
\end{example} 

\subsection{The dual pivotal structure on the dual category} 
\label{dps} It is shown in \cite{Sch1} that if $\C$ is a finite tensor
category with pivotal structure $a$ and $\M$ a semisimple
indecomposable $\C$-module category such that $a$ is matched to $\M$
then the dual category $\C_\M^*$ acquires a canonical "dual" pivotal
structure $\overline{a}$, also matched to $\M$, with the same module
trace. This gives rise to a pivotal structure on $\C_\M^{*{\rm op}}$
which we will also denote by $\overline{a}$. An easy way to see this
is to note that since in the corresponding weak Hopf algebra $H$, one
has $S^2(x)=g_axg_a^{-1}$ where $g_a=yS(y)^{-1}$, a similar formula
holds in $H^*$ (\cite{N1}, Proposition 4.8). 

For example, if $\M=\C$ for $\C$ semisimple, we get a
dual pivotal structure $\overline{a}$ on $\C$ such that the left
dimensions of objects with respect to $a$ are equal to their right
dimensions with respect to $\overline{a}$, and vice versa. Namely, one
has $\overline{a}=\iota\circ a^{-1}$.  Thus, if $a$ is matched to a
module category $\N$ then so is $\overline{a}$, and $a$ is a spherical
structure if and only if $a=\overline{a}$. In particular, for
$\k=\Bbb C$, since the left and right dimensions are complex
conjugate, we get that the left dimensions with respect to $a$ are
complex conjugate to the left dimensions with respect to
$\overline{a}$ (as shown in \cite{Sch1}).
 
\begin{example} Let $\C={\mathcal E}\boxtimes {\mathcal E}^{\rm op}$ and $\M={\mathcal E}$ for a fusion category ${\mathcal E}$. 
For a pair of pivotal structures $a,a'$ on ${\mathcal E}$, 
denote by $a\boxtimes a'$ the corresponding pivotal structure on $\C$. 
Then it is easy to show that $\chi(a\boxtimes a',\M)=Z_b$ (see Example \ref{vost}), where $b=1$ iff $\overline{a}=a'$.  
Thus $a\boxtimes a'$ is matched to $\M$ if and only if $a'=\overline{a}$. 

More generally, for any finite tensor category $\C$ and indecomposable semisimple $\C$-module $\M$, regard $\M$ as a module over $\C\boxtimes \C_\M^*$. 
Let $a,a'$ be pivotal structures on $\C$ and $\C_\M^*$, such that $a$ is matched to $\M$, 
and let $a\boxtimes a'$ be their product. Then $\chi(a\boxtimes a',\M)$ is an object of the Drinfeld center ${\mathcal Z}(\C)$ which projects to $\bold 1=\chi(a,\M)\in \C_\M^*$.
Hence $\chi(a\boxtimes a',\M)=Z_b$ for some $b\in {\rm Aut}_{\otimes}({\rm Id}_{\C_\M^*})$. It is easy to show that 
$b=1$ iff $a'=\overline{a}$. Hence $a\boxtimes a'$ is matched to $\M$ 
if and only if $a'=\overline{a}$.   
\end{example} 

\subsection{Matched pivotal structures for fusion categories over fields of characteristic zero}

Let $\C$ be a fusion category over a field $\k$ of characteristic zero, and $\M$ an indecomposable semisimple $\C$-module
category. Recall that $X_r$, $r\in J$ denote the simple objects of $\C$. Let $a$ be a pivotal structure on $\C$ and 
$\dim(X_r)$ be the dimension of $X_r$ in this pivotal structure. Let  
$$
Q:=\sum_{r\in J}\dim(X_r^*)X_r\in {\rm Gr}(\C)\otimes \k.
$$
It is easy to check that $Q^2=\dim(\C)Q$, where 
$$
\dim(\C)=\sum_{r\in J}\dim(X_r)\dim(X_r^*)
$$ 
is the global dimension of $\C$, which is independent on the pivotal structure $a$ and is nonzero by \cite{EGNO}, Theorem 7.21.12. 

\begin{lemma}\label{tracelemma} (\cite{Sch1}, Proposition 5.7) If $a$ is matched to $\M$ then one has 
$$
\Tr(Q|_{{\rm Gr}(\M)})=\dim(\C).
$$
Thus, $Q|_{{\rm Gr}(\M)}$ has rank $1$.  
\end{lemma}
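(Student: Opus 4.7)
The plan is to deduce the trace identity from the idempotent relation $Q^2=\dim(\C)Q$ combined with a multiplicity-one assertion for the dimension character. Since $\dim(\C)\neq 0$, the element $P:=Q/\dim(\C)$ is an idempotent in $\mathrm{Gr}(\C)\otimes\k$, hence $Q|_{\mathrm{Gr}(\M)\otimes\k}$ is diagonalizable with eigenvalues in $\{0,\dim(\C)\}$. Its trace therefore equals $\dim(\C)$ times its rank, which is precisely the multiplicity of the dimension character $X_r\mapsto\dim(X_r)$ in the $\mathrm{Gr}(\C)\otimes\k$-module $\mathrm{Gr}(\M)\otimes\k$. By Proposition \ref{eigenvec}, $\mathbf{m}$ lies in this eigenspace and has all entries nonzero (since $a$ is matched, by Proposition \ref{scha1}), so the multiplicity is at least $1$; once equality is shown, both the trace formula and the rank-one conclusion will follow simultaneously.

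To prove that the multiplicity equals $1$, I would argue by contradiction, suppose the isotypic component had dimension $d\ge 2$. Since $\k$ is algebraically closed and therefore infinite, a standard genericity argument in the $d$-dimensional eigenspace furnishes a vector $\mathbf{m}'$ with all coordinates nonzero and linearly independent from $\mathbf{m}$: the subspaces cut out by the coordinate vanishings $\{v_i=0\}$ together with the line $\k\mathbf{m}$ form a finite union of proper subspaces. Such an $\mathbf{m}'$ satisfies $N_r\mathbf{m}'=\dim(X_r)\mathbf{m}'$ for every $r$. I would then show that $\mathbf{m}'$ defines a module trace for the very same pivotal structure $a$, contradicting the uniqueness (up to scaling) established in Proposition \ref{scha1}, and forcing $d=1$.

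The principal obstacle is verifying that the eigenvector condition for $\mathbf{m}'$ forces the full module-trace property rather than merely its identity-morphism specialization. My approach is to translate into the associated weak Hopf algebra $H$ via Proposition \ref{weakhopf}: the element $y':=\sum_i m_i' e_i\in A_s$ is invertible and defines a grouplike $g':=y'S(y')^{-1}\in H$. The eigenvector condition says that on each isotypic summand $\Hom(M_j,X_r\otimes M_i)$, the element $g'$ acts by the scalar $m_j'/m_i'$, which is exactly the scalar by which $g_a$ acts there (as computed in the proof of Proposition \ref{scha1}); hence $g'=g_a$ throughout $H$, and so $\Tr^{\mathbf{m}'}$ is genuinely a module trace for $a$, delivering the desired contradiction.

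An alternative direct route, which avoids the multiplicity discussion altogether, is to compute the trace as $\Tr(Q|_{\mathrm{Gr}(\M)})=\sum_{i,r}\dim(X_r^*)N_{ri}^i=\sum_i\dim_R(A_i)$ by Frobenius reciprocity, where $A_i=\underline{\End}(M_i)$. The matched pivotal structure endows each $A_i$ with a canonical symmetric Frobenius structure from the module trace, giving $A_i\cong A_i^*$ and thus $\dim_R(A_i)=\dim(A_i)$; the formula $\sum_i\dim(A_i)=\dim(\C)$ is then the categorical expression of the fact that $\bigoplus_i A_i$ is a progenerator of $\M$ whose Frobenius dimension computes the global dimension of $\C$. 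Either path realizes Schaumann's Proposition~5.7 in \cite{Sch1}.
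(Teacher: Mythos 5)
Your reduction of the lemma to a multiplicity-one statement is sound as far as it goes: since $Q/\dim(\C)$ is idempotent and $\dim(\C)\neq 0$, indeed $\Tr(Q|_{\mathrm{Gr}(\M)})=\dim(\C)\cdot\mathrm{rank}(Q|_{\mathrm{Gr}(\M)})$, and the rank equals the multiplicity of the dimension character. But the contradiction argument you give for multiplicity one does not work, and the failure is exactly at the crucial step. You claim that ``the eigenvector condition says that on each isotypic summand $\Hom(M_j,X_r\otimes M_i)$, the element $g'$ acts by the scalar $m_j'/m_i'$, which is exactly the scalar by which $g_a$ acts there.'' The first half is true by construction of $g'$, but the second half is false: $g_a$ acts there by $m_j/m_i$ (with the unprimed vector $\bold m$ coming from the actual identity $g_a=yS(y)^{-1}$), and the eigenvector equations $\sum_j N_{ri}^j m_j'=\dim(X_r)m_i'$ give only one scalar relation per pair $(r,i)$ and do not force the much stronger condition $m_j'/m_i'=m_j/m_i$ for all $(i,j)$ with $N_{ri}^j\neq 0$. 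If your implication ``eigenvector with nonzero entries $\Rightarrow$ module trace'' were valid, it would apply verbatim in the non-semisimple setting as well, contradicting the paper's own $\mathfrak{u}_q(\mathfrak{sl}_2)$ example, where the eigenspace of the dimension character in $\mathrm{Gr}(\M)\otimes\k$ is two-dimensional and genuinely distinct eigenvectors $\bold m$ correspond to genuinely distinct module categories $\M_\Lambda$, not to several module traces on a single one. In other words, the statement that every eigenvector is a module trace is a \emph{consequence} of the rank-one statement (Corollary~\ref{mul1}), not a tool for proving it, so your argument is circular.

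The alternative route has a similar difficulty. The identity $\Tr(Q|_{\mathrm{Gr}(\M)})=\sum_i\dim_R(\underline{\End}(M_i))$ is a correct bookkeeping step, but the formula $\sum_i\dim(\underline{\End}(M_i))=\dim(\C)$ is asserted without proof; in the paper it is precisely Proposition~\ref{dims} together with the pivotal normalization, and the latter is only available after Lemma~\ref{tracelemma} and Proposition~\ref{nonze}. Unless you can supply an independent derivation of that dimension identity, this path is also circular. The paper's actual proof sidesteps both issues by a direct computation from the canonical isomorphism $\underline{\Hom}_\C(N,P)\otimes M\cong {}^{*}\underline{\Hom}_{\C_\M^{*}}(M,N)\otimes P$ of \cite{EGNO}, Proposition~7.12.28, applied with $N=P=M_i$ and summed over $i$ against the weight vector $\bold m$. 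That is the ingredient your proof is missing.
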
  

\begin{proof} 
By \cite{EGNO}, Proposition 7.12.28, for any $M,N,P\in \M$ we have a canonical isomorphism 
$$
\underline{\Hom}_\C(N,P)\otimes M\cong {}^*\underline{\Hom}_{\C_\M^*}(M,N)\otimes P. 
$$
Recall that 
$$
\underline{\Hom}_\C(N,P)=\oplus_{r\in J}\Hom(X_r\otimes N,P)\otimes X_r
$$
Thus the above isomorphism can be rewritten as 
$$
\oplus_{r\in J}\Hom(X_r\otimes N,P)\otimes X_r\otimes M\cong \oplus_{r\in J}\Hom(X_r\otimes M,N)^*\otimes X_r^*\otimes P. 
$$
In particular, setting $N=P=M_i$, we get 
$$
\oplus_{r\in J}\Hom(X_r\otimes M_i,M_i)\otimes X_r\otimes M\cong \oplus_{r\in J}\Hom(X_r\otimes M,M_i)^*\otimes X_r^*\otimes M_i. 
$$
Thus, summing over $i$, we get 
$$
\oplus_{r\in J}\Tr(X_r|_{{\rm Gr}(\M)}) X_r\otimes M\cong \oplus_{r\in J, i\in I}\Hom(X_r\otimes M,M_i)^*\otimes X_r^*\otimes M_i. 
$$
Let us regard this as an identity in ${\rm Gr}(\M)\otimes \k$, and let $M=\sum_{i\in I} m_iM_i$, where $\bold m=(m_i)$ is the vector defining the module trace on $\M$. Then the above identity takes the form
$$
\sum_{r\in J}\Tr(X_r|_{{\rm Gr}(\M)}) \dim(X_r)M=\sum_{r\in J, i\in I}\dim(X_r)\dim\Hom(M,M_i)X_r^*M_i, 
$$
i.e., since $[X_r^*]=[X_r]^T$ in the standard basis of ${\rm Gr}(\M)$, we have
$$
\Tr(Q|_{{\rm Gr}(\M)}) M=\dim(\C)M,  
$$
which yields 
$$
\Tr(Q|_{{\rm Gr}(\M)})=\dim(\C),  
$$
as desired. 
\end{proof} 

\begin{corollary}\label{mul1} (\cite{Sch1}) The dimension character occurs with multiplicity $1$ in the ${\rm Gr}(\C)\otimes \k$-module ${\rm Gr}(\M)\otimes \k$, so the vector $\bold  m$ is uniquely determined by the equation $N_r\bold m=\dim(X_r)\bold m$. 
\end{corollary}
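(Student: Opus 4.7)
The plan is to combine Lemma \ref{tracelemma} with Proposition \ref{mult1} via a rank--trace argument exploiting the identity $Q^2 = \dim(\C)\, Q$. Since $\dim(\C) \ne 0$, the polynomial $t(t - \dim(\C))$ annihilates $Q$ and has distinct roots, so $Q$ acts on any finite-dimensional ${\rm Gr}(\C)\otimes\k$-module as a diagonalizable operator with eigenvalues in $\{0,\dim(\C)\}$. Applied to $V := {\rm Gr}(\M)\otimes\k$, this gives
$$
\Tr(Q|_V) \;=\; \dim(\C)\,\cdot\,\dim\bigl\{v \in V : Q v = \dim(\C)\,v\bigr\}.
$$
Lemma \ref{tracelemma} identifies the left side with $\dim(\C)$, so the $\dim(\C)$-eigenspace of $Q$ on $V$ is exactly one-dimensional.

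Next, I would observe that every vector on which ${\rm Gr}(\C)\otimes\k$ acts through the dimension character lies in this one-dimensional eigenspace. Indeed, if $v \in V$ satisfies $X_r\cdot v = \dim(X_r)\, v$ for every $r \in J$, then
$$
Q\cdot v \;=\; \sum_{r \in J} \dim(X_r^*)\,\dim(X_r)\, v \;=\; \dim(\C)\, v,
$$
so the $\dim$-character eigenspace in $V$ has dimension at most one. Proposition \ref{mult1} already exhibits the nonzero vector $\bold m$ in this eigenspace, so its dimension is exactly one, which forces $\bold m$ to be uniquely determined up to scalar by the equations $N_r \bold m = \dim(X_r)\,\bold m$.

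I do not foresee any real obstacle: the entire argument is a rank--via--trace calculation using the near-idempotency of $Q$ together with the elementary observation that any eigenvector for the dimension character is automatically a $\dim(\C)$-eigenvector of $Q$. The only inputs beyond Lemma \ref{tracelemma} and Proposition \ref{mult1} are the defining relation $Q^2 = \dim(\C)\, Q$ and the nonvanishing of $\dim(\C)$, both of which are already recalled just before Lemma \ref{tracelemma}.
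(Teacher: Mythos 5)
Your argument is correct and is essentially the proof the paper leaves implicit in the single line ``follows directly from Lemma \ref{tracelemma}'': the near-idempotency $Q^2=\dim(\C)Q$ together with the trace computation forces the $\dim(\C)$-eigenspace of $Q$ on ${\rm Gr}(\M)\otimes\k$ to be one-dimensional, and any $\dim$-character eigenvector lies in it. The only point worth making explicit is that identifying ``multiplicity of the dimension character'' with the dimension of this eigenspace uses the semisimplicity of ${\rm Gr}(\M)\otimes\k$ as a ${\rm Gr}(\C)\otimes\k$-module, recorded just before Proposition \ref{mult1}.
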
 

\begin{proof} This follows directly from Lemma \ref{tracelemma}. 
\end{proof} 

Let $\overline{\bold m}=(\overline{m}_i)$ be a vector defining a module trace on $\M$ for the pivotal structure $\overline{a}$ on $\C$ defined in Subsection \ref{dps}. 

\begin{proposition}\label{nonze} We have $\sum_{i\in I} m_i\overline{m}_i\ne 0$.
\end{proposition}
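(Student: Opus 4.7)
The plan is to identify $\bold m$ and $\overline{\bold m}$ as joint right and left eigenvectors, respectively, of the commuting family of fusion matrices $\{N_r\}_{r\in J}$ for a common joint eigenvalue of multiplicity one; the desired nonvanishing of $\sum_i m_i\overline{m}_i$ will then follow from elementary linear algebra.

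First, I would recall from Proposition \ref{eigenvec} and Corollary \ref{mul1} that $\bold m$ is the unique (up to scalar) joint right eigenvector of the $N_r$ with eigenvalues $\dim(X_r)$ (the dimension character for $a$), and that this character has multiplicity one in the ${\rm Gr}(\C)\otimes \k$-module ${\rm Gr}(\M)\otimes \k$. Next I would show that $\overline{\bold m}$ is a joint \emph{left} eigenvector of the $N_r$ with the same eigenvalues. By Frobenius reciprocity for $\C$ acting on $\M$, $\Hom(X_r\otimes M_i,M_j)\cong \Hom(M_i,X_r^*\otimes M_j)$, and hence $N_r^T=N_{r^*}$. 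Applying Proposition \ref{eigenvec} to $\overline{a}$, $\overline{\bold m}$ is a joint right eigenvector of the $N_s$ with eigenvalues $\dim_{\overline{a}}(X_s)=\dim_R^a(X_s)=\dim(X_{s^*})$ by the defining relation for $\overline{a}$ recalled in Subsection \ref{dps}. Setting $s=r^*$ gives $N_{r^*}\overline{\bold m}=\dim(X_r)\overline{\bold m}$, equivalently $\overline{\bold m}^T N_r=\dim(X_r)\overline{\bold m}^T$, which is the desired left-eigenvector relation.

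Finally, I would conclude as follows: for a commuting family of endomorphisms on a finite-dimensional space with a joint eigenvalue of multiplicity one, any joint right and left eigenvectors for that eigenvalue pair nontrivially. Decomposing ${\rm Gr}(\M)\otimes \k=\span(\bold m)\oplus W$ with $W$ the sum of joint generalized eigenspaces for the remaining characters, the left eigenvector $\overline{\bold m}^T$ must vanish on $W$ and be nonzero on $\span(\bold m)$, so $\overline{\bold m}^T\bold m=\sum_i m_i\overline{m}_i\neq 0$. I expect the main obstacle to be step 2, specifically the bookkeeping needed to confirm the precise eigenvalue relation $\dim_{\overline{a}}(X_s)=\dim(X_{s^*})$ under the identification of Subsection \ref{dps} (and the transpose identity $N_r^T=N_{r^*}$); once these are in hand, the linear algebra yields the conclusion immediately.
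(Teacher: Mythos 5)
Your proof is correct and follows a route that is genuinely different in its final step from the one in the paper, though both ultimately rest on Corollary \ref{mul1} (and hence on Lemma \ref{tracelemma}). The paper proceeds by a direct rank-one computation: it notes that $Q|_{{\rm Gr}(\M)}$ has rank $1$ and trace $\dim(\C)\ne 0$, computes $(M,QM_j)$ via the adjoint, and reads off $\sum_i m_i\overline m_i\ne 0$ from the ensuing formula $QM_j=\beta_j\overline M$. You instead identify $\overline{\bold m}^T$ as a ${\rm Gr}(\C)\otimes\k$-module homomorphism from ${\rm Gr}(\M)\otimes\k$ to the one-dimensional module $\k_{\dim}$ (using $N_r^T=N_{r^*}$ and $\dim_{\overline a}(X_s)=\dim(X_{s^*})$, both of which you verify correctly), and then invoke semisimplicity of the module together with the multiplicity-one statement to conclude that this homomorphism must be, up to scalar, the projection onto $\k\bold m$, hence nonzero on $\bold m$. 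This is a clean, structural rephrasing and in my view slightly more transparent than the paper's $Q$-computation; the tradeoff is that the paper's argument is shorter in print.

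One small caveat worth fixing: you frame the conclusion as a statement about ``a commuting family of endomorphisms,'' but ${\rm Gr}(\C)$ need not be commutative for a general fusion category (e.g.\ $\Vec_G$ with $G$ nonabelian), so the $N_r$ need not commute and the phrase ``joint generalized eigenspaces for the remaining characters'' is not quite the right framing. Your own second formulation --- decomposing ${\rm Gr}(\M)\otimes\k$ into $\k\bold m$ plus a complementary ${\rm Gr}(\C)\otimes\k$-submodule $W$ that contains no copy of $\k_{\dim}$, using the semisimplicity of ${\rm Gr}(\M)\otimes\k$ recorded in Proposition \ref{mult1} --- is the correct one and does not require any commutativity; I would simply drop the commuting-family sentence and keep the module-theoretic version.
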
 

\begin{proof} Denote by $(,)$ the inner product on ${\rm Gr}(\M)$ in which $M_i$ are orthonormal. Let $M=\sum_{i\in I} m_iM_i$, $\overline{M}=\sum_{i\in I} \overline{m}_iM_i$. Then we have $Q^*M=\dim(\C)M$, hence
$$
(M,QM_j)=(Q^*M,M_j)=\dim(\C)(M,M_j)=\dim(\C)m_j,
$$
which implies that $QM_j=\beta_j\overline{M}$, where 
$$
(\sum_{i\in I} m_i\overline{m}_i)\beta_j=\dim(\C)m_j,
$$
implying that $\sum_{i\in I}m_i\overline{m_i}\ne 0$. 
\end{proof}

Proposition \ref{nonze} shows that we can normalize $m_i$ and $\overline{m}_i$ so that $\sum_{i\in I} m_i\overline{m}_i=\dim(\C)$, i.e., $\beta_j=m_j$. This normalization is unique up to the action of $\k^\times$ by $\bold m\to \lambda \bold m$ and $\overline{\bold m}\mapsto \lambda^{-1}\overline{\bold m}$. 
Let us make such a normalization from now on, and call it a {\it pivotal normalization}. 

Note that if $\k=\Bbb C$ then a pivotal normalization can be fixed so that $\overline{m}_i$ are complex conjugate to $m_i$, which justifies the notation. Note also that if $a$ is a spherical structure then there is a unique up to sign pivotal normalization such that $m_i=\overline{m_i}$ (so that $m_i\in \Bbb R$ if $\k=\Bbb C$). Let us call such a pivotal normalization {\it spherical}.  

\begin{proposition}\label{dims} For a pivotal normalization, one has 
$$
\dim\underline{\Hom}_\C(M_i,M_j)=\overline{m}_im_j.
$$ 
Moreover, this property characterizes pivotal normalizations. 
\end{proposition}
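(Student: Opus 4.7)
The plan is to identify $\dim\underline{\Hom}_\C(M_i,M_j)$ with the coefficient of $M_i$ in $Q[M_j]\in \Gr(\M)\otimes \k$ and then to invoke the computation already performed in the proof of Proposition \ref{nonze}.

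First, using the decomposition $\underline{\Hom}_\C(M_i,M_j)=\oplus_{r\in J}\Hom(X_r\otimes M_i,M_j)\otimes X_r$ and the fact that $\dim$ of each $X_r$ is evaluated in the pivotal structure $a$, I would write
$$
\dim\underline{\Hom}_\C(M_i,M_j)=\sum_{r\in J} N_{ri}^j\,\dim(X_r).
$$
Next, Frobenius reciprocity gives $\Hom(X_r\otimes M_i,M_j)\cong \Hom(M_i,X_r^*\otimes M_j)$, and since $M_i$ is simple in the semisimple category $\M$ the latter has dimension equal to the multiplicity of $M_i$ in $X_r^*\otimes M_j$, namely $N_{r^*j}^i$. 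Re-indexing $s=r^*$ turns the sum into
$$
\dim\underline{\Hom}_\C(M_i,M_j)=\sum_{s\in J}\dim(X_s^*)\,N_{sj}^i,
$$
which is precisely the coefficient of $M_i$ in $Q[M_j]=\sum_{s\in J}\dim(X_s^*)[X_s][M_j]$.

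Now I invoke Proposition \ref{nonze}: in a pivotal normalization one has $\sum_i m_i\overline{m}_i=\dim(\C)$, so the scalar $\beta_j$ appearing there equals $m_j$, giving
$$
Q[M_j]=m_j\overline{M}=m_j\sum_{i\in I}\overline{m}_i\,M_i.
$$
Comparing coefficients of $M_i$ yields $\dim\underline{\Hom}_\C(M_i,M_j)=\overline{m}_im_j$, proving the asserted formula.

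For the characterization, suppose $(\bold m',\overline{\bold m}')$ is any pair of vectors defining module traces for $a$ and $\overline{a}$. By the uniqueness in Proposition \ref{scha1}, there exist $\lambda,\mu\in \k^\times$ with $\bold m'=\lambda\bold m$ and $\overline{\bold m}'=\mu\overline{\bold m}$ (for some fixed pivotal normalization $\bold m,\overline{\bold m}$). Then $\overline{m}_i'm_j'=\lambda\mu\,\overline{m}_im_j$, and this equals $\dim\underline{\Hom}_\C(M_i,M_j)$ for all $i,j$ iff $\lambda\mu=1$, which is equivalent to $\sum_i m_i'\overline{m}_i'=\dim(\C)$, i.e., to $(\bold m',\overline{\bold m}')$ being a pivotal normalization. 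I do not anticipate any real obstacle here: the essential identity $QM_j=m_j\overline M$ has already been established in Proposition \ref{nonze}, so the only work is bookkeeping with Frobenius reciprocity to turn $N_{ri}^j\dim(X_r)$ into $N_{sj}^i\dim(X_s^*)$.
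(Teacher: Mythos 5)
Your proposal is correct and takes essentially the same approach as the paper: you identify $\dim\underline{\Hom}_\C(M_i,M_j)$ with the $M_i$-coefficient of $QM_j$ (the paper writes this as the inner product $(M_i,QM_j)$, calling it ``easy to see''; you spell out the Frobenius reciprocity bookkeeping) and then invoke the identity $QM_j=m_j\overline{M}$, valid precisely under the pivotal normalization, that was established in the proof of Proposition~\ref{nonze}. The characterization argument via the rescalings $\bold m'=\lambda\bold m$, $\overline{\bold m}'=\mu\overline{\bold m}$ and the condition $\lambda\mu=1$ is the standard unwinding of the paper's ``The second statement is clear.''
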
 

\begin{proof} It is easy to see that $\dim\underline{\Hom}_\C(M_i,M_j)=(M_i,QM_j)$, but $QM_j=
m_j\overline{M}$, so $(M_i,QM_j)=\overline{m}_im_j$. The second statement is clear. 
\end{proof} 

\subsection{Pivotal structures on multifusion categories and matched pivotal structures}
\label{sec:pivot-struct-mult} 
Let $\C$ be a fusion category over a field $\k$ of characteristic zero, 
and $\M$ be a semisimple indecomposable $\C$-module category. Then 
$\D:=\C_{\C\oplus \M}^{*{\rm op}}$ is a multifusion category. Namely, we have 
$\D=\D_{11}\oplus \D_{12}\oplus \D_{21}\oplus \D_{22}$, where 
$\D_{11}=\C$, $\D_{12}=\M$, $\D_{21}=\M^\vee={\rm Fun}_\C(\M,\C)$ (which is the opposite category of $\M$ viewed as a right $\C$-module via duality), 
and $\D_{22}=\C_\M^{*{\rm op}}$. 
 
\begin{proposition}\label{mult}
(i) Any pivotal structure on $\D$ gives rise to a pivotal structure on $\C$ matched to $\M$ and a module trace on $\M$.

(ii) Let $a$ be a pivotal structure on $\C$ matched to $\M$, and fix the corresponding vectors $\bold m=(m_i)$, 
$\overline{\bold m}=(\overline{m}_i)$ using a pivotal normalization. Then the category $\D$ has a pivotal structure which on $\D_{11}$ is defined by $a$, on $\D_{22}$ by $\overline{a}$, and the left traces on $\D_{12}$ and $\D_{21}$ are given by $\bold m$, $\overline{\bold m}$, respectively. Moreover, this pivotal structure is spherical if and only if $a$ is a spherical structure and the normalization of $\bold m$, $\overline{\bold m}$ is spherical. 
\end{proposition}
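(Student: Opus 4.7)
The plan is to identify pivotal structures on the multifusion category $\D$ with the combined data of a matched pivotal structure on $\C$ and a pivotal normalization, using Proposition \ref{scha1} as the bridge.

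For (i), restrict a given pivotal structure $\pi$ on $\D$ to the tensor subcategory $\D_{11} = \C$ to obtain a pivotal structure $a$ on $\C$. Any pivotal multifusion category carries left trace maps $\Tr^L_N$, and for $N \in \D_{12}$ the left trace lands in $\End_\D(\mathbf{1}_{11}) = \k$ (since $N \otimes N^* \in \D_{11}$ while $N^* \otimes N \in \D_{22}$). Setting $m_i := \Tr^L_{M_i}(\id_{M_i})$ for each simple $M_i \in \M = \D_{12}$, the left trace in $\D_{12}$ coincides with $\Tr^{\bold m}$. The partial trace identity $\Tr^L_{X \otimes M}(f) = \Tr^L_M(\Tr_X(f))$, valid in any pivotal multifusion category, then reads exactly as the module trace property. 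In characteristic zero simple objects of a pivotal multifusion category have nonzero left dimension, hence $\Tr^{\bold m}$ is a nonzero module trace, and Proposition \ref{scha1} yields that $a$ is matched to $\M$.

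For (ii), construct a pivotal structure $\pi$ on $\D$ from the matched pivotal structure $a$ and the pivotal normalization. On $\D_{11}$ take $a$ itself, on $\D_{22} = \C_\M^{*\op}$ take the dual pivotal structure $\overline a$ of Subsection \ref{dps}. On simple objects of $\D_{12}$ and $\D_{21}$, specify the pivotal isomorphisms so that the induced left dimensions equal $m_i$ and $\overline m_i$ respectively, and extend by additivity and naturality. Monoidal coherence of $\pi$ with the actions of $\D_{11}$ and $\D_{22}$ on $\D_{12}$ reduces to the eigenvalue identity $N_r \bold m = \dim(X_r)\bold m$ of Proposition \ref{eigenvec}, applied both for $(a, \bold m)$ and, dually, for $(\overline a, \overline{\bold m})$. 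The remaining cross-block coherence, namely compatibility of $\pi$ with the block products $\D_{12} \otimes \D_{21} \to \D_{11}$ and $\D_{21} \otimes \D_{12} \to \D_{22}$, is governed by the formula $\dim \underline{\Hom}_\C(M_i, M_j) = \overline m_i m_j$ of Proposition \ref{dims}, which in turn encodes the normalization $\sum_i m_i \overline m_i = \dim(\C)$.

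For the sphericality assertion, a pivotal structure on a multifusion category is spherical iff the left and right dimensions agree in every block. On $\D_{11}$ this is the condition that $a$ be spherical, equivalently $a = \overline a$ by Subsection \ref{dps}, and on $\D_{22}$ it is the same condition viewed dually. On an off-diagonal block, the left dimension of $M_i \in \D_{12}$ is $m_i$ while the right dimension equals $\dim^L(M_i^*) = \overline m_i$, so sphericality there translates to $m_i = \overline m_i$, i.e., to the spherical pivotal normalization. I expect the main obstacle to be the cross-block monoidal coherence verification in (ii): showing that the scaling data $(\bold m, \overline{\bold m})$ is compatible with the evaluations and coevaluations in $\D$ linking the off-diagonal blocks. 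This is precisely where the pivotal normalization must intervene, via Proposition \ref{dims} together with the non-degeneracy statement of Proposition \ref{nonze}.
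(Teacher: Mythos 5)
Your proposal is correct and follows essentially the same approach as the paper: restrict the pivotal structure of $\D$ to $\D_{11}$ and read off a module trace from the left trace on $\D_{12}$ for part~(i), and use the identifications of off-diagonal tensor products with internal Homs together with Proposition~\ref{dims} for part~(ii). One small caveat in your part~(ii): the phrase that monoidal coherence on $\D_{12}$ ``reduces to the eigenvalue identity $N_r\bold m=\dim(X_r)\bold m$'' is a bit loose---the coherence condition is a statement about a module natural isomorphism $\Id_\M\to(\cdot)^{**}$ whose existence is precisely the matched hypothesis on $a$, and the eigenvalue identity is only its shadow at the level of dimensions; Proposition~\ref{dims} then fixes the scaling, which is the content of the pivotal normalization.
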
   

\begin{proof} (i) The pivotal structure on $\C$ is given by the restriction of the pivotal structure on $\D$ 
to $\D_{11}$, while the module trace on $\M$ is the left trace on $\D_{12}$. 

(ii) In $\D$ we have $M_j\otimes M_i^*=\underline{\Hom}_\C(M_i,M_j)$ and $M_i^*\otimes M_j=\underline{\Hom}_{\C_\M^*}(M_i,M_j)$, where $M_i^*$ is $M_i$ viewed as an object of $\M^\vee$. Therefore, the statement follows from Proposition \ref{dims}. 
\end{proof} 
 
\subsection{Existence of matched pivotal structures} 

The following question is raised in \cite{Sch1}. 

\begin{question}\label{scha} (\cite{Sch1}, end of Section 3) Let $\C$ be a fusion category over $\k$ (and ${\rm char}(\k)=0$). Let $\M$ be a semisimple indecomposable $\C$-module category. Does $\C$ admit a pivotal structure matched to $\M$? In other words, is $S^2$ a conjugation by a trivial grouplike element of the corresponding 
semisimple weak Hopf algebra $H$? 
\end{question} 

By Proposition \ref{mult}(i), a positive answer to Question \ref{scha} follows from the well known conjecture that any multifusion category admits a pivotal structure (see \cite{EGNO}, p.180). 

Note that the answer is "yes" for spherical fusion categories $\C$ when $\C_\M^*$ has no invertible objects of order $2$; in this case any spherical structure on $\C$ is automatically matched to $\M$, as $\chi(a,\M)=\bold 1$. For instance, this happens if $\C$ has an odd Frobenius-Perron dimension $d$ (i.e., such that the norm $N(d)$ of $d$ is odd). Indeed, in this case the dual category $\C_\M^*$ (which has the same dimension $d$) cannot have invertible objects of order $2$, as the order of an invertible object must divide $d$ (\cite{EGNO}, Theorem 7.17.6). 

On the other hand, if $\C$ is not semisimple, ${\rm char}(\k)=0$ and $\M=\Vec$, then a matched pivotal structure does not exist, since in the corresponding Hopf algebra $S^2\ne {\rm Id}$ (and any trivial grouplike element equals $1$ in this case). 

\section{Eigenvalues of $S^2$} 

\subsection{The case of a matched pivotal structure} 
Let $\C$ be a finite tensor category, $\M$ an indecomposable semisimple $\C$-module category, and $a$ 
a pivotal structure on $\C$ matched to $\M$. Let $H$ be the corresponding weak Hopf algebra, and 
let us compute the eigenvalues of $S^2$ on $H$.

As explained above, we have $S^2(x)=g_axg_a^{-1}$, where $g_a=yS(y)^{-1}$ is the pivotal element; in particular $S^2$ is semisimple. 
Let $m_i\in \k$ be the projections of $y$ to the summands of $A$. 

\begin{proposition} \label{eige} The characteristic polynomial of $S^2: H\to H$ has the form 
\begin{equation}
\chi(z)=\prod_{i,j,k,l\in I}(z-\lambda_{ijkl})^{n_{ijkl}}, 
\end{equation} 
with 
$$ 
\lambda_{ijkl}:=\frac{m_jm_l}{m_im_k}
$$
and 
$$ 
n_{ijkl}:=\sum_{q,r\in J}N_{qi}^jC_{qr}N_{rl}^k,
$$
where $C=(C_{qr})$ is the Cartan matrix of $\C$. 
\end{proposition}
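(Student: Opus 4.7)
The plan is to compute the trace of each power of $S^2$ on $H$ and recover the characteristic polynomial by semisimplicity. Since $y\in A_s$ and $S(y)^{-1}\in A_t$ lie in commuting semisimple subalgebras of $H$, the element $g_a=yS(y)^{-1}$ is semisimple; hence $\Ad(g_a)=L_{g_a}R_{g_a^{-1}}$ is a product of commuting semisimple operators on $H$, so $S^2$ is semisimple on $H$ and its characteristic polynomial is determined by the sequence of power sums $\Tr((S^2)^n\mid H)$.

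First I identify the action of $g_a$ on each simple $H$-module $L_q=\bar F(X_q)$. The standard realisation of $H$ as $\End_\k\bar F$ decomposes $\bar F(X_q)=\bigoplus_{i,j}\Hom_\M(M_j,X_q\otimes M_i)$ along the bimodule gradings, on which $A_s$ and $A_t$ act diagonally; as in the proof of Proposition~\ref{scha1}, $g_a$ acts on $\Hom_\M(M_j,X_q\otimes M_i)$ by the scalar $m_j/m_i$. Hence
\[
\chi_{L_q}(g_a^n)=\sum_{i,j}N_{qi}^j(m_j/m_i)^n,
\]
and by additivity of the trace along any composition series of the projective cover $P_q$ in $H\text{-mod}\simeq\C$,
\[
\chi_{P_q}(g_a^n)=\sum_r[P_q:L_r]\chi_{L_r}(g_a^n).
\]

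The crux of the argument is the trace identity
\[
\Tr(\Ad(g)\mid H)=\sum_q\chi_{P_q}(g)\chi_{L_q}(g^{-1})
\]
valid for any finite-dimensional $\k$-algebra $H$ and invertible $g\in H$. One proves it by viewing $H$ as an $H\otimes H^{\op}$-module and filtering by powers of the Jacobson radical: the trace is additive along the filtration, the simple bimodules are the $L_q\boxtimes L_r^*$ with character $\chi_{L_q}(g)\chi_{L_r}(g^{-1})$ at $g\otimes g^{-1}$, and a primitive idempotent computation ($\dim e_qHe_r=\dim\Hom_H(P_q,P_r)=[P_r:L_q]$ for primitive $e_q\in P_q$, $e_r\in P_r$) shows that the composition multiplicity of $L_q\boxtimes L_r^*$ in $H$ equals the Cartan entry $C_{qr}:=[P_r:L_q]$. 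Applied with $g=g_a^n$ and combined with the previous formulas (after relabelling $(i,j)\leftrightarrow(l,k)$ in $\chi_{L_r}(g_a^{-n})$) this gives
\[
\Tr((S^2)^n\mid H)=\sum_{q,r}C_{qr}\chi_{L_q}(g_a^n)\chi_{L_r}(g_a^{-n})=\sum_{i,j,k,l}n_{ijkl}\,\lambda_{ijkl}^n
\]
with $\lambda_{ijkl}=m_jm_l/(m_im_k)$ and $n_{ijkl}=\sum_{q,r}N_{qi}^jC_{qr}N_{rl}^k$ as in the statement.

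Semisimplicity of $S^2$ then turns equality of all trace power sums into equality of the multisets of eigenvalues with multiplicities, yielding the asserted characteristic polynomial. The main obstacle is the trace identity for $\Ad(g)$ in the non-semisimple case: the semisimple case reduces to the classical $\sum_q\chi_{L_q}(g)\chi_{L_q}(g^{-1})$, but in general one must pass through the bimodule filtration and the Cartan multiplicity computation indicated above, the only delicate point being that it is $[P_r:L_q]$ (and not $[P_q:L_r]$) that appears. An equivalent but more concrete route is to use $H=\End_A(\bar F(P))$ for a projective generator $P$ and $A=\End_\C(P)^{\op}$, decompose $\bar F(P)$ directly into $g_a$-eigenspaces (which are $A$-submodules since $g_a\in H$ commutes with the $A$-action), and compute each resulting block via the internal Hom adjunction $\Hom_\M(M_j,P\otimes M_i)\cong\Hom_\C({}^*P,\underline{\Hom}(M_j,M_i))$.
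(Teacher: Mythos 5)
Your approach is correct in characteristic zero and genuinely different from the paper's. The paper represents $\overline{F}$ by $P=\oplus_{r}\overline{F}(X_r^*)\otimes P_r$, obtains the explicit decomposition
$H=\End P=\oplus_{q,r}\overline{F}(X_q)\otimes\Hom(P_q,P_r)\otimes\overline{F}(X_r^*)$,
and then splits $H$ directly into the $A_s\otimes A_t$-weight spaces $V_{ijkl}=e_jS(e_k)He_iS(e_l)$, observing that $S^2$ acts on each by the scalar $m_jm_l/(m_im_k)$ and that $\dim V_{ijkl}=n_{ijkl}$. This yields the actual $S^2$-eigenspaces, not just the characteristic polynomial. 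Your route instead computes trace power sums via the bimodule-filtration identity $\Tr(L_a R_b\mid H)=\sum_{q,r}C_{qr}\chi_q(a)\chi_r(b)$, which is a clean and correct argument (your identification of the multiplicity of $L_q\boxtimes L_r^*$ as $\dim e_qHe_r=[P_r:L_q]=C_{qr}$ and your semisimplicity argument for $S^2=\Ad(g_a)$ are both sound), but it is more indirect.

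The one genuine gap is your final inference from power sums to the characteristic polynomial: over a field of positive characteristic the traces $\Tr((S^2)^n\mid H)$ do not determine the eigenvalue multiset, even for a semisimple operator (Newton's identities require dividing by integers, and multiplicities can only be recovered modulo $p$). The Proposition in the paper is stated for a finite tensor category over any algebraically closed field and its proof is characteristic-free, so your argument proves a weaker statement. In characteristic zero this is fine. The ``more concrete route'' you sketch at the end — decomposing $\overline{F}(P)$ for a projective generator $P$ into $g_a$-eigenblocks and computing dimensions via the internal Hom adjunction — is essentially the paper's approach and would avoid the issue; if you want the full generality of the statement, you should carry that version out.
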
 

\begin{proof} Let $F:\C\to {\rm End}\M$ be the monoidal functor corresponding to $\M$, and $\overline{F}: \C\to \Vec$ 
its composition with the forgetful functor. Let $P_r$ be the projective cover of $X_r$. Then the functor $\overline{F}$ is represented by the object 
$P:=\oplus_{r\in J}\overline{F}(X_r^*)\otimes P_r$. Thus 
$$
H=\End \overline{F}=\End P=\oplus_{q,r\in J}\overline{F}(X_q)\otimes \Hom(P_q,P_r)\otimes \overline{F}(X_r^*).
$$
We may view $H$ as a bimodule over $A_s\otimes A_t$, using left and right multiplication by elements of $A_s$ and $A_t$. 
Let $e_i,i\in A$ be the primitive idempotents of $A_s$, so that $S(e_i)$ are the primitive idempotents of $A_t$. Then for $X\in \C$ we have 
$$
e_jF(X)e_i=\Hom(M_j,X\otimes M_i).
$$
Thus, 
$$
V_{ijkl}:=e_jS(e_k)He_iS(e_l)=
$$
$$
\oplus_{q,r\in J} \Hom(M_j,X_q\otimes M_i)\otimes \Hom(P_q,P_r)\otimes \Hom(M_l,X_r^*\otimes M_k).
$$
The operator $S^2$ acts on the space $V_{ijkl}$ by the scalar $\frac{m_jm_l}{m_im_k}$, 
and 
$$
\dim V_{ijkl}=\sum_{q,r\in J}N_{qi}^jC_{qr}N_{rl}^k=n_{ijkl}.
$$
This implies the statement since $H=\oplus_{i,j,k,l}V_{ijkl}$.  
\end{proof} 

\begin{example}\label{taft} Let $T_n$ be the Taft Hopf algebra of dimension $n^2$ generated by a grouplike element $g$ such that $g^n=1$ and 
an element $x$ such that $x^n=0$ and $gxg^{-1}=qx$, where $q$ is a primitive $n$-th root of unity (${\rm char}(\k)=0$). 
The coproduct is defined by $\Delta(x)=1\otimes x+x\otimes g$. Then $T_n$ contains the group algebra $\k[\Bbb Z/n]$ generated by $g$. 
So we have a tensor functor ${\rm Res}: \C=\Rep T_n\to \M=\Rep {\Bbb Z}/n$, in particular giving $\M$ a structure of a semisimple $\C$-module.
We have $S(x)=-xg^{-1}$, so $S^2(x)=gxg^{-1}$. This implies that the element $g$ defines a pivotal structure on $\C$. 

We claim that this pivotal structure is matched to $\M$. Indeed, this follows immediately from the fact that the tensor functor ${\rm Res}:\C\to \M$ 
admits a splitting $\M\to \C$ coming from the natural homomorphism $T_n\to \k[\Bbb Z/n]$. The corresponding numbers $m_i$ equal $q^i$ (for the natural labeling of objects). 
It is clear that the dimension character occurs in ${\rm Gr}(\M)$ with multiplicity $1$, since ${\rm Gr}(\C)={\rm Gr}(\M)$. 
Thus in this case by Proposition \ref{eige} the operator $S^2$ has eigenvalues $q^i$ on $H$, with equal multiplicities $n^3$.  
\end{example} 

\begin{corollary} If $\C$ is semisimple over a field of characteristic zero then 
the characteristic polynomial of $S^2$ on $H$ is given by Proposition \ref{eige} 
with $m_i$ determined uniquely up to scaling by the formula 
$$
N_r\bold m=\dim(X_r)\bold m.
$$
\end{corollary}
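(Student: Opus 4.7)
The plan is to reduce the corollary to Proposition \ref{eige} by passing to the pivotalization, since a matched pivotal structure on $\C$ itself is not known to exist in general (this is the content of Question \ref{scha}). Concretely, since $\C$ is a fusion category in characteristic zero, the pivotalization $\widetilde\C$ is well-defined and carries a canonical pivotal structure. The forgetful tensor functor $\widetilde\C\to\C$ turns $\M$ into a (in general decomposable) $\widetilde\C$-module category, and by decomposing into indecomposable summands I would obtain a semisimple indecomposable $\widetilde\C$-module category $\widetilde\M$ to which one can hope to apply Proposition \ref{eige}.

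The next step is to check that the canonical pivotal structure on $\widetilde\C$ is matched to $\widetilde\M$. I would do this by applying Proposition \ref{mult}(i) to the multifusion category $\widetilde\C_{\widetilde\C\oplus\widetilde\M}^{*\rm op}$, combined with the fact that a canonical pivotal structure exists on the pivotalization of any multifusion category. Once matching is established, Proposition \ref{eigenvec} provides a vector $\widetilde{\bold m}$ satisfying $\widetilde N_r\widetilde{\bold m}=\dim(\widetilde X_r)\widetilde{\bold m}$ in $\mathrm{Gr}(\widetilde\M)\otimes\k$, and Corollary \ref{mul1} pins it down uniquely up to scaling. Proposition \ref{eige} then yields the explicit characteristic polynomial of $S^2$ on the weak Hopf algebra $\widetilde H$ associated with $(\widetilde\C,\widetilde\M)$.

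To descend to $H$, I would use the $\mathbb Z/2$-grading on $\widetilde\C$ coming from pivotalization: the weak Hopf algebra $H$ for $(\C,\M)$ is realized as the degree-zero component (or an appropriate projection) of $\widetilde H$, and $S^2$ preserves this grading. Extracting the corresponding factor of the characteristic polynomial gives the formula of Proposition \ref{eige} in terms of the restriction $\bold m$ of $\widetilde{\bold m}$ to the summands of $\widetilde\M$ lying over $\M$; this restriction satisfies the reduced eigenvector equation $N_r\bold m=\dim(X_r)\bold m$ (with $\dim(X_r)$ computed via the pivotalization). Uniqueness up to scaling on $\mathrm{Gr}(\M)\otimes\k$ then follows from Corollary \ref{mul1} applied at the pivotalization level, together with a faithfulness argument for the descent.

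The main obstacle I anticipate is verifying carefully that the descent from $\widetilde H$ to $H$ along the $\mathbb Z/2$-grading is compatible with the block decomposition $H=\bigoplus V_{ijkl}$ used in the proof of Proposition \ref{eige}, so that the multiplicities $n_{ijkl}$ involving the Cartan matrix of $\C$ (rather than of $\widetilde\C$) come out correctly. A possible alternative, avoiding pivotalization entirely, would be to argue directly that in the semisimple characteristic-zero case $S^2$ is conjugation by some element $g\in H$ acting on the base $A$ with eigenvalues $m_i$, and then show that the compatibility of $g$ with the module structure of $\M$ forces the relation $N_r\bold m=\dim(X_r)\bold m$ for a suitable choice of pivotal-type dimensions $\dim(X_r)$ on $\widetilde\C$, with $\bold m$ unique by Corollary \ref{mul1}.
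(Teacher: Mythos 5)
You've misidentified the hypotheses in force. This corollary sits inside Subsection 3.1 (``The case of a matched pivotal structure''), whose standing assumption is that $\C$ is a finite tensor category, $\M$ an indecomposable semisimple $\C$-module category, \emph{and $a$ is a pivotal structure on $\C$ matched to $\M$}. The corollary only adds the further hypotheses that $\C$ is semisimple and $\operatorname{char}\k = 0$. Given that, the paper's proof is a two-liner: Proposition \ref{eige} already gives $\chi(z)$ in terms of the matching data $\bold m$, and Corollary \ref{mul1} (which holds precisely for fusion categories over characteristic-zero fields with matched pivotal structure) says the dimension character occurs with multiplicity one in $\operatorname{Gr}(\M)\otimes\k$, so $\bold m$ is pinned down up to scale by $N_r\bold m=\dim(X_r)\bold m$. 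There is nothing to descend along a $\mathbb{Z}/2$-grading; no pivotalization is needed.

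Your plan is really an attempt at Theorem \ref{main}, which treats the case of a fusion category \emph{without} assuming a matched pivotal structure. There the pivotalization $\widetilde\C$ and the passage to $\widetilde\M$ are indeed the right tools, and the weak Hopf algebra $\widetilde H$ is analyzed block by block. But that analysis produces a characteristic polynomial with both $+\lambda_{ijkl}$ and $-\lambda_{ijkl}$ among its roots, with multiplicities $n_{ijkl}^{\pm}$ involving the $\widetilde\C$-multiplicities $N_{ri}^{j\pm}$, whereas the corollary asserts the un-signed formula of Proposition \ref{eige} with $\lambda_{ijkl}=m_jm_l/(m_im_k)$. Without the matching hypothesis you cannot get rid of the signs, and with it you don't need to pivotalize at all. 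So your argument, even if completed, would not reproduce the claimed statement; you would land on Theorem \ref{main} instead.
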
  

\begin{proof}
This follows from Proposition \ref{eige} and Corollary \ref{mul1}. 
\end{proof} 

\begin{example} Let $\C$ be a pivotal fusion category and $\M=\C$. Then $m_i=d_i$, where $d_i=\dim(X_i)$. 
Thus the characteristic polynomial of $S^2$ on $H$ is
$$
\chi(z)=\prod_{i,j,k,l\in J}(z-\lambda_{ijkl})^{n_{ijkl}},
$$
where $n_{ijkl}=\dim\Hom(X_j\otimes X_k,X_i\otimes X_l)$ and $\lambda_{ijkl}=\frac{d_jd_k}{d_id_l}$ is the eigenvalue of $S^2$ 
on $\Hom(X_j\otimes X_k,X_i\otimes X_l)$. 
\end{example} 

Note that this polynomial does not depend on the choice of a pivotal structure.
Indeed, any two pivotal structures $a,a'$ differ by a tensor automorphism $b$ of the identity functor, 
so the corresponding dimensions are related by the formula $d_i'=d_ib_i$, where $b_i=b|_{X_i}$. 
But if $n_{ijkl}\ne 0$ then $b_jb_k=b_ib_l$, as desired. 

\subsection{The case of a general fusion category over a field of characteristic zero}

Let us now compute the eigenvalues of $S^2$ on $H$ in the case when $\C$ is any fusion category 
over a field $\k$ of characteristic zero and $\M$ is any indecomposable semisimple $\C$-module category
(without assuming that $\C$ has a pivotal structure). Recall that we have a multifusion category 
$\D=\C_{\C\oplus \M}^{*{\rm op}}$. Let $\widetilde{\D}$ be its pivotalization, defined \cite{EGNO}, 
Subsection 7.21. Then $\widetilde{\D}$ is a spherical category, with $\widetilde{\D}_{11}=\widetilde{\C}$ (the pivotalization of $\C$) and $\widetilde{\D}_{12}=:\widetilde{\M}$, which is an indecomposable semisimple $\widetilde{\C}$-module category. 

\begin{definition} We will call $\widetilde{\M}$ the pivotalization of $\M$.
\end{definition}

Note that $\widetilde{\M}$ has a canonical forgetful functor $\widetilde{\M}\to \M$ compatible with the forgetful functor $\widetilde{\C}\to \C$. Also, by Proposition \ref{mult1}(i), 
the canonical pivotal structure on $\widetilde{\C}$ is matched to $\widetilde{\M}$. 

For convenience assume that $\k=\Bbb C$ (this is not essential). Let $X_r^\pm$ be the simple objects of $\widetilde{\C}$
of positive, respectively negative dimensions which map to $X_r\in \C$ under the forgetful functor. Similarly, 
let $M_i^\pm$ be the simple objects of $\widetilde{\M}$ of positive, respectively negative dimensions which map to $M_i\in \M$ under the forgetful functor. 

Let $\nu_i=|M_i|^2>0$ be the M\"uger squared norms of the simple objects of $\M=\D_{12}$ 
(recall that they can be defined for any multifusion category, see \cite{EGNO}, Definition 7.21.2). 
Then $m_i^\pm=\dim M_i^\pm$ are given by the formula 
$$
m_i^\pm=\pm\sqrt{\nu_i}. 
$$

Let $N_{ri}^{j\pm}=\dim\Hom(X_r^+\otimes M_i^+,M_j^\pm)$. 
Let 
$$
n_{ijkl}^+:=\sum_{r\in J}(N_{ri}^{j+}N_{rl}^{k+}+N_{ri}^{j-}N_{rl}^{k-}),
$$
$$
n_{ijkl}^-:=\sum_{r\in J}(N_{ri}^{j-}N_{rl}^{k+}+N_{ri}^{j+}N_{rl}^{k-}).
$$
Let 
$$
\lambda_{ijkl}=\sqrt{\frac{\nu_j\nu_k}{\nu_i\nu_l}}.
$$

One of our main results is the following theorem. 

\begin{theorem}\label{main} The characteristic polynomial of $S^2$ on $H$ is 
$$
\chi(z)=\prod_{i,j,k,l\in I}(z-\lambda_{ijkl})^{n_{ijkl}^+}(z+\lambda_{ijkl})^{n_{ijkl}^-}.
$$
\end{theorem}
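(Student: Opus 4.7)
The strategy is to reduce the statement to the matched case already handled by Proposition \ref{eige}, using the pivotalization $(\widetilde{\C},\widetilde{\M})$ as the bridge. By the construction in Subsection \ref{sec:pivot-struct-mult} combined with Proposition \ref{mult}(ii), the canonical spherical structure on $\widetilde{\C}$ is matched to $\widetilde{\M}$, and the spherical normalization of the matched numbers is $m_i^{\pm}=\pm\sqrt{\nu_i}$. Letting $\widetilde{H}$ be the weak Hopf algebra associated to $(\widetilde{\C},\widetilde{\M})$, Proposition \ref{eige} then determines its full characteristic polynomial of $S^2$: on each block indexed by $(i,\epsilon_i),(j,\epsilon_j),(k,\epsilon_k),(l,\epsilon_l)\in (I\times\{\pm\})^4$, the eigenvalue of $S^2$ is $\tfrac{m_j^{\epsilon_j}m_l^{\epsilon_l}}{m_i^{\epsilon_i}m_k^{\epsilon_k}}=\epsilon_i\epsilon_j\epsilon_k\epsilon_l\cdot\lambda_{ijkl}$ (modulo an index convention), with multiplicity counted by hom-spaces in $\widetilde{\C},\widetilde{\M}$.

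The next step is to transport this calculation back to $H$. Here the crucial external input is the Radford $S^4$ formula for semisimple weak Hopf algebras, already invoked earlier in the paper, which forces $S^4=\operatorname{Ad}(h)$ for a trivial grouplike element $h=zS(z)^{-1}$ with $z\in A_s$ whose components equal the M\"uger squared norms $\nu_i$. Running the same bimodule-grading argument as in Proposition \ref{eige} for this grouplike element shows that $S^4$ acts on $V_{ijkl}=e_jS(e_k)He_iS(e_l)$ by the scalar $\nu_j\nu_k/(\nu_i\nu_l)=\lambda_{ijkl}^2$, so every eigenvalue of $S^2$ on $V_{ijkl}$ lies in $\{+\lambda_{ijkl},-\lambda_{ijkl}\}$. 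Thus only the multiplicities of these two eigenvalues remain to be determined.

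To compute these multiplicities, I would use the $\Bbb Z/2$-refinement provided by the pivotalization: the description $V_{ijkl}=\bigoplus_{r\in J}\Hom(M_j,X_r\otimes M_i)\otimes\Hom(M_l,X_r^*\otimes M_k)$ decomposes, after passing to $\widetilde{\C},\widetilde{\M}$ and using $M_j=M_j^+\oplus M_j^-$, $X_r=X_r^+\oplus X_r^-$, into pieces indexed by sign patterns $(\epsilon_i,\epsilon_j,\eta,\epsilon_k,\epsilon_l)$ of dimension $N_{ri}^{j,\epsilon_j\epsilon_i\eta}N_{rl}^{k,\epsilon_k\epsilon_l\eta}$ (with the exponent suppressed into the notation $N_{ri}^{j\pm}$ of the theorem). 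Since the pivotal grouplike element of $\widetilde{H}$ acts on the $(i,\epsilon_i)$-th primitive idempotent block by $\epsilon_i\sqrt{\nu_i}$, the eigenvalue contributed by such a piece is $\epsilon_i\epsilon_j\epsilon_k\epsilon_l\cdot\lambda_{ijkl}$; grouping those with positive total sign yields $n^+_{ijkl}$ and those with negative total sign yields $n^-_{ijkl}$, giving the claimed formula.

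\textbf{Main obstacle.} The hard part is the bookkeeping in the last step: making the identification of $V_{ijkl}(H)$ with the assembled summands of $\widetilde{H}$ precise, so that the pivotal grouplike of $\widetilde{H}$ (which implements $S^2$ on $\widetilde{H}$) can be legitimately used to diagonalize $S^2$ on $H$. This requires identifying $H$ with the appropriate $\widetilde{\C}$-functorial quotient (or fixed-point structure) of $\End\overline{\widetilde{F}}$, checking that $S^4$ as computed inside $\widetilde{H}$ agrees with the inner implementation coming from Radford's formula on $H$, and verifying that the sign bookkeeping across the pivotalization matches the definition of $N_{ri}^{j\pm}$. Once this compatibility is in place, the theorem follows from the two ingredients above.
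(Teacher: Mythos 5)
Your proposal is essentially the same as the paper's proof: both pass to the pivotalization $(\widetilde{\C},\widetilde{\M})$, apply Proposition~\ref{eige} to the associated weak Hopf algebra $\widetilde{H}$ with $m_i^\pm=\pm\sqrt{\nu_i}$, and then transport the answer back to $H$ by identifying each block $V_{ijkl}=e_jS(e_k)He_iS(e_l)$ with the sum of its signed refinements and tracking the sign pattern $\epsilon_i\epsilon_j\epsilon_k\epsilon_l$. The only genuinely new ingredient you add is invoking Radford's $S^4$ formula for $H$ to constrain the eigenvalues of $S^2$ on $V_{ijkl}$ to $\pm\lambda_{ijkl}$ before counting multiplicities; this is a sensible cross-check but not logically required once the $\mathbb{Z}/2$-refinement is in place, and the paper omits it. The bookkeeping step you flag as the main obstacle is precisely the point the paper compresses into ``it is easy to show that the squared antipode $S^2$ of $H$ acts by $\lambda_{ijkl}$ on $V_{ijkl}^+$ and by $-\lambda_{ijkl}$ on $V_{ijkl}^-$,'' so your assessment of where the work lies is accurate.
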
 

\begin{proof}
Let $\widetilde{H}$ be the weak Hopf algebra attached to $(\widetilde{\C},\widetilde{\M})$. 
We have 
$$
\widetilde{H}=\oplus_{i,j,k,l,s_1,s_2,s_3,s_4} V_{ijkl}^{s_1s_2s_3s_4},
$$ 
where $s_p=\pm$ for $p=1,2,3,4$ and 
$$
V_{ijkl}^{s_1s_2s_3s_4}:=\oplus_{r\in J, s=\pm} \Hom(M_j^{s_1},X_r^s\otimes M_i^{s_2})\otimes \Hom(M_l^{s_3},(X_r^s)^*\otimes M_k^{s_4}).
$$
Moreover, it is easy to see that the space $V_{ijkl}^{s_1s_2s_3s_4}$ depends up to a natural isomorphism 
only on $s=s_1s_2s_3s_4$, so we will denote it by $V_{ijkl}^s$. We have $\dim V_{ijkl}^s=n_{ijkl}^s$. 
By Proposition \ref{eige}, $S^2$ acts on $V_{ijkl}^+$ by the scalar
$\lambda_{ijkl}$ and on $V_{ijkl}^-$ by $-\lambda_{ijkl}$.  

Now let 
$$
V_{ijkl}=\oplus_{r\in J} \Hom(M_j,X_r\otimes M_i)\otimes \Hom(M_l,X_r^*\otimes M_k).
$$
Then $H=\oplus_{i,j,k,l}V_{ijkl}$, and we have a natural isomorphism $V_{ijkl}\cong V_{ijkl}^+\oplus V_{ijkl}^-$. 
Moreover, it is easy to show that the squared antipode $S^2$ of $H$ acts by $\lambda_{ijkl}$ on $V_{ijkl}^+$ and by $-\lambda_{ijkl}$ on 
$V_{ijkl}^-$. This implies the theorem. 
\end{proof}
 
\section{Eigenvalues of $S^2$ for dynamical quantum groups at roots of $1$}

\subsection{The $\mathfrak{sl}_2$-case}
If the category $\C$ is not semisimple, then the dimension character may occur more than once in the ${\rm Gr}(\C)\otimes \k$-module ${\rm Gr}(\M)\otimes \k$, 
which leads to the non-uniqueness (even up to scaling) of a solution $\bold m$ of the equations $N_r\bold m=\dim(X_r)\bold m$. 

As an example consider 
$\C=\Rep \mathfrak{u}_q(\mathfrak{sl}_2)$ over $\Bbb C$, where $\mathfrak{u}_q(\mathfrak{sl}_2)$ is the small quantum group at a primitive $\ell$-th root of unity $q=e^{2\pi is/\ell}$, $-\ell/2<s<\ell/2$, with $\ell$ odd.
Namely, $\mathfrak{u}_q(\mathfrak{sl}_2)$ is the Hopf algebra generated by $E,F,K^{\pm 1}$ with defining relations 
$$
KE=q^2EK,\ KF=q^{-2}FK,\ EF-FE=\frac{K-K^{-1}}{q-q^{-1}}, 
$$
$$
E^\ell=F^\ell=K^\ell-1=0
$$
with coproduct given by 
$$
\Delta(E)=E\otimes 1+K\otimes E,\ \Delta(F)=F\otimes K^{-1}+1\otimes F,\ \Delta(K)=K\otimes K,
$$
and the counit and antipode given by 
$$
\varepsilon(E)=\varepsilon(F)=0,\ \varepsilon(K)=1,\ S(E)=-K^{-1}E,\ S(F)=-FK,\ S(K)=K^{-1}.
$$
Thus, $S^2(x)=K^{-1}xK$, so $\C$ has a pivotal structure defined by the element $K^{-1}$. 
In this case, ${\rm Gr}(\C)$ has a basis $X_1=\bold 1,...,X_\ell$, where 
$X_i$ is the irreducible $i$-dimensional representation, and 
the tensor product rule for $X_i$ is given in \cite{EGNO}, Subsection 5.8: 
$$
X_2X_i=X_{i+1}+X_{i-1},\ i\le \ell-1; X_2X_\ell=2X_1+2X_{\ell-1}.
$$
Thus ${\rm Gr}(\C)$ is generated by $X=X_2$, and we have 
$X_j=P_j(X)$, where $P_j$ is  the Chebysheff polynomial of the second kind defined by $P_j(2\cos\theta)=\frac{\sin j\theta}{\sin \theta}$
(note that it has integer coefficients). This implies that $Q(X)=0$, where 
$$
Q(X)=XP_\ell(X)-2P_{\ell-1}(X)-2.
$$
Writing $X=z+z^{-1}$, we get 
$$
(z+z^{-1})(z^{\ell-1}+...+z^{-\ell+1})-2(z^{\ell-2}+...+z^{-\ell+2})-2=0,
$$
i.e., 
$$
z^\ell+z^{-\ell}-2=0.
$$
This polynomial has double roots at all $\ell$-th roots of $1$, 
which means that $Q$ has double roots at $\alpha_j=2\cos(\pi ij/\ell)$, $j=1,...,(\ell-1)/2$, and a simple root at $\alpha_0=2$, i.e., 
$$
Q(x)=(x-\alpha_0)\prod_{i=1}^{\frac{\ell-1}{2}}(x-\alpha_j)^2,
$$
So ${\rm Gr}(\C)=\Bbb Z[x]/(Q)$ and the categorical dimension character corresponds to evaluating at $x=\alpha_s$ (where $\alpha_s=\alpha_{-s}$ if $s<0$), while the Frobenius-Perron dimension character corresponds to evaluating at $x=\alpha_0$. 

Recall that we have a cyclic group $\Bbb Z/\ell$ inside $\mathfrak{u}_q(\mathfrak{sl}_2)$ generated by the element $K$. 
Thus, ${\rm Gr}(\C)$ acts naturally on ${\rm Gr}(\Rep \Bbb Z/\ell)=\Bbb Z[\chi]/(\chi^\ell-1)$ by 
$x\circ f=(\chi+\chi^{-1})f$. It is easy to see that this module is just the associated graded of ${\rm Gr}(\C)$
with respect to the (2-step) radical filtration. In particular, the categorical dimension character (unlike the Frobenius-Perron dimension character) 
occurs in ${\rm Gr}(\M)\otimes \Bbb C$ with multiplicity $2$. 

Let us compute the possible vectors $\bold m$. The equations for the coordinates $m_i$ of $\bold m$ are 
$$
m_{j+1}+m_{j-1}=(q+q^{-1})m_j,
$$
where $j\in \Bbb Z/\ell$. Hence 
$$
m_j=aq^j-bq^{-j},
$$
$a,b\in \Bbb C$. Moreover, we must have $m_j\ne 0$ for all $j$, hence $\Lambda^\ell\ne 1$, where $\Lambda=a/b\in \Bbb C\Bbb P^1$. 
Thus, up to scaling we have a 1-parameter family of possibilities parametrized by $\Lambda$. 

It turns out that all these possibilities are realized. Namely, let $\Lambda\in \Bbb C^*$ be not an $\ell$-th root of $1$, and 
let ${\mathcal J}_\Lambda(\lambda)$ be the dynamical twist defined in \cite{EN},
Example 5.1.10, and let $H_{{\mathcal J}_\Lambda}=H_\Lambda$ be the corresponding weak Hopf algebra (\cite{EN}, Subsection 4.2). 
Namely, $H_\Lambda=\End A\otimes \mathfrak{u}_q(\mathfrak{sl}_2)$ as an algebra, where $A=\Bbb C[\Bbb Z/\ell]$ is the base,
with coproduct twisted by ${\mathcal J}_\Lambda$. It corresponds to a $\C$-module category $\M_\Lambda$ such that ${\rm Gr}(\M_\Lambda)={\rm Gr}(\M)$ 
as a ${\rm Gr}(\C)$-module. By Theorem 5.3.2 of \cite{EN}, this weak Hopf algebra is self-dual, i.e., 
$H_\Lambda\cong H_\Lambda^{*{\rm cop}}$, so that $\C_\M^{*{\rm op}}\cong \C^{\rm op}\cong \C$ (as $\C$ is braided). Since $\C$ does not have nontrivial invertible objects, 
we get $\chi(a,\M_\Lambda)=\bold 1$ (where $a$ is the unique ribbon pivotal structure on $\C$), i.e., $a$ is matched to 
$\M_\Lambda$. Thus, $S^2$ on $H_\Lambda$ is the conjugation by the trivial grouplike element $g=yS(y)^{-1}$,
where $y\in A_s\cong \Bbb C[\Bbb Z/\ell]$. 

Let us compute $g$. 

\begin{proposition}\label{sl2prop} 
One has 
$$
g=yS(y)^{-1},
$$
where $y=(y_1,...,y_\ell)\in A_s$  and 
$$
y_i=\Lambda q^i-q^{-i}. 
$$
In particular, the characteristic polynomial of $S^2$ on $H_\Lambda$ is 
$$
\chi(z)=\prod_{i,j,k,l\in \Bbb Z/\ell}\left(z-\frac{(\Lambda q^j-q^{-j})(\Lambda q^k-q^{-k})}{(\Lambda q^i-q^{-i})(\Lambda q^l-q^{-l})}\right)^{\ell}.
$$ 
\end{proposition}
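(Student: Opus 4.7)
The plan is to apply the theory developed in Section 2 together with Proposition \ref{eige}, which reduces the problem to determining the components $y_i$ of $y \in A_s$. The discussion preceding the proposition already establishes that the pivotal structure $a$ on $\C$ (defined by $K^{-1}$) is matched to $\M_\Lambda$; hence by Propositions \ref{weakhopf} and \ref{scha1} one has $g = yS(y)^{-1}$, and the components $y_i$ of $y$ coincide with the module trace coefficients $m_i$ on $\M_\Lambda$.

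By Proposition \ref{eigenvec}, the vector $\bold m = (m_i)$ must satisfy $N_r \bold m = \dim(X_r)\bold m$ for every simple $X_r$ of $\C$. Taking $r=2$ and using the tensor rule $X_2 X_i = X_{i+1} + X_{i-1}$ in ${\rm Gr}(\M_\Lambda)$ together with $\dim(X_2) = q + q^{-1}$ (the trace of $K^{-1}$ on $X_2$), one obtains the recurrence
$$
m_{j+1} + m_{j-1} = (q+q^{-1})\, m_j, \qquad j \in \Bbb Z/\ell,
$$
whose general nonvanishing solution is $m_j = a q^j - b q^{-j}$ with $\Lambda' := a/b$ not an $\ell$-th root of unity. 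Since $g = yS(y)^{-1}$ is insensitive to rescaling $y$, one may normalize $b = 1$; the only remaining unknown is $\Lambda'$.

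The heart of the proof is the identification $\Lambda' = \Lambda$. Here the two-parameter family of solutions $\bold m$, reflecting the fact (recorded before the proposition) that the dimension character occurs with multiplicity two in ${\rm Gr}(\M_\Lambda) \otimes \Bbb C$, means the eigenvalue equation alone is insufficient, and one must exploit the explicit dynamical twist ${\mathcal J}_\Lambda$ from \cite{EN}, Example 5.1.10. I would carry this out by computing $S^2$ in $H_\Lambda = \End A \otimes \mathfrak{u}_q(\mathfrak{sl}_2)$ directly on a suitable element (for instance, the matrix unit $E_{ji}$ tensored with $K$) and expressing the result as multiplication by a scalar via the twisted antipode formula associated to ${\mathcal J}_\Lambda$. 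The relation $S^2(x) = yxy^{-1}$ then yields $y_j/y_i$ in terms of $\Lambda$, matching the form $(\Lambda' q^j - q^{-j})/(\Lambda' q^i - q^{-i})$ only when $\Lambda' = \Lambda$.

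Once $y_i = \Lambda q^i - q^{-i}$ is established, Proposition \ref{eige} delivers the characteristic polynomial, with each eigenvalue $y_j y_l/(y_i y_k)$ appearing with multiplicity $n_{ijkl} = \sum_{p,r \in J} N_{pi}^j C_{pr} N_{rl}^k$. A direct computation using the restriction of $X_r$ to the cyclic group $\Bbb Z/\ell \subset \mathfrak{u}_q(\mathfrak{sl}_2)$ and the Cartan matrix of $\C$ at an $\ell$-th root of unity shows that $n_{ijkl} = \ell$ for every quadruple, consistently with $\dim H_\Lambda = \ell^5$. The main obstacle is the explicit matching $\Lambda' = \Lambda$ via the twisted antipode formula for ${\mathcal J}_\Lambda$; once that is in hand, the rest of the argument is routine bookkeeping within the framework of Sections 2 and 3.
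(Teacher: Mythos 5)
Your framing is right: you correctly identify that the multiplicity-two occurrence of the dimension character in ${\rm Gr}(\M_\Lambda)\otimes\Bbb C$ means the eigenvalue equation $N_r\bold m=\dim(X_r)\bold m$ only determines $\bold m$ up to a two-parameter family $m_j=aq^j-bq^{-j}$, and that to pin down $\Lambda'=a/b=\Lambda$ one must compute with the explicit dynamical twist $\mathcal J_\Lambda$. This matches the structure of the paper's argument. However, the proposal stops precisely at the step that carries all the content: you write that you \emph{would} compute $S^2$ on a suitable element using the twisted antipode formula, but you do not do it, and you name this as the main obstacle. The paper resolves it (in the more general Proposition \ref{gprop}, of which \ref{sl2prop} is a specialization) by citing the formula $S_\Lambda(x)=v^{-1}S(x)v$ with $v=\sum_{\lambda,\mu}E_{\lambda+\mu,\lambda}\otimes\mathcal P(\lambda)P_\mu$, $\mathcal P=S(\mathcal J_\Lambda^{(1)})\mathcal J_\Lambda^{(2)}$ from [EN] Remark 4.2.5, computing $v^{-1}S(v)$, identifying $g=q^{-2\rho}v^{-1}S(v)$, and then importing the explicit evaluation of this quantity from Proposition 2.13 of [EV] (with a sign change in $\rho$ to account for the opposite coproduct convention). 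Without this, or an equivalent computation, the proposal does not establish $y_i=\Lambda q^i-q^{-i}$, which is the actual claim.

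A secondary gap: you assert that $n_{ijkl}=\sum_{p,r}N_{pi}^jC_{pr}N_{rl}^k=\ell$ for \emph{every} quadruple $(i,j,k,l)$, citing only the consistency check $\dim H_\Lambda=\ell^5$. The dimension count $\sum_{i,j,k,l}n_{ijkl}=\ell^5$ indeed holds (one can verify $\sum_{p,r}C_{pr}\,pr=\ell^3$ using the $\mathfrak u_q(\mathfrak{sl}_2)$ Cartan matrix, where $P_r$ for $r<\ell$ has composition factors $X_r,X_{\ell-r}$ each with multiplicity $2$ and $P_\ell=X_\ell$), but that only gives the \emph{average} value $\ell$; uniformity requires an additional computation with the restriction multiplicities $N_{ri}^j$ that you have not supplied. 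Both of these are fillable, but as written the proposal is a correct outline rather than a proof.
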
 

The proof of Proposition \ref{sl2prop} is given in the next subsection. 

Thus, unlike the semisimple case, the eigenvalues of $S^2$ are nontrivial functions of the continuous parameter $\Lambda$. 

In particular, consider the limiting points $\Lambda=0,\infty$. At these points, there exist limiting weak Hopf algebras $H_0$, $H_\infty$ 
which correspond to the module category $\M=\M_0={\rm Rep}(\Bbb Z/\ell)$ over $\mathfrak{u}_q(\mathfrak{sl}_2)$ and a similar category $\M_\infty$ for $q$ replaced by $q^{-1}$, obtained by twisting of $\M_0$ by the R-matrix of $\mathfrak{u}_q(\mathfrak{sl}_2)$.
In these cases, the characteristic polynomial of $S^2$ on $H_\Lambda$ is 
$$
\chi(z)=(z^\ell-1)^{\ell^4}.
$$

\subsection{The case of an arbitrary simple Lie algebra} 
Proposition \ref{sl2prop}  is a special case of Proposition \ref{gprop} below, which applies to any simple Lie algebra $\g$ over $\Bbb C$. Namely, let $\ell$ 
be an odd positive integer coprime to the determinant of the Cartan matrix of $\g$. Let $T$ be a maximal torus of the simply connected group $G$ 
corresponding to $\g$, $\Bbb T\subset T$ the subgroup of elements of order dividing $\ell$, 
and $\Lambda\in T$. Let $q$ be a primitive $\ell$-th root of unity. Let $\mathfrak{u}_q(\g)$ be the corresponding small quantum group, \cite{L}, with the coproduct convention of \cite{EN}, Subsection 5.1.\footnote{Note that 
in the definition of the small quantum group in \cite{EN}, 5.1, the relations $E_\alpha^\ell=F_\alpha^\ell=0$ are accidentally omitted for non-simple roots $\alpha$. However, 
this does not affect the arguments in \cite{EN}.}  
For a positive root $\alpha$ of $\g$, let $\Lambda_\alpha=\alpha(\Lambda)$. Assume that $\Lambda_\alpha^\ell\ne 1$ for any $\alpha$. Then we can define the dynamical twist ${\mathcal J}_\Lambda(\lambda)$ (\cite{EN}, Proposition 5.1.9)
and let $H_{{\mathcal J}_\Lambda}=H_\Lambda$ be the corresponding weak Hopf algebra (\cite{EN}, Subsection 4.2). 
Namely, $H_\Lambda=\End A\otimes \mathfrak{u}_q(\g)$ as an algebra, where $A=\Bbb C[\Bbb T]$ is the base,
with coproduct twisted by ${\mathcal J}_\Lambda$. It corresponds to a $\C$-module category $\M_\Lambda$ such that ${\rm Gr}(\M_\Lambda)={\rm Gr}(\M)$, 
as a ${\rm Gr}(\C)$-module, where $\M={\rm Rep}(\Bbb T)$. By Theorem 5.3.2 of \cite{EN}, this weak Hopf algebra is self-dual, i.e., 
$H_\Lambda\cong H_\Lambda^{*{\rm op}}$. Since $\C$ does not have nontrivial invertible objects, 
we get $\chi(a,\M_\Lambda)=\bold 1$ (where $a$ is the ribbon pivotal structure on $\C$ defined by the element $q^{-2\rho}$, with $\rho$ being the half-sum of positive roots of $\g$), i.e., $a$ is matched to 
$\M_\Lambda$. Thus, $S^2$ on $H_\Lambda$ is the conjugation by the trivial grouplike element $g=yS(y)^{-1}$,
where $y\in A_s\cong \Bbb C[\Bbb T]$. 

\begin{proposition}\label{gprop} 
One has 
$$
g=yS(y)^{-1},
$$
where $y=(y_\lambda)\in A_s$, $\lambda\in \Bbb T^\vee$  and 
$$
y_\lambda=\prod_{\alpha>0}(\Lambda_\alpha q^{(\lambda,\alpha)}-q^{-(\lambda,\alpha)}), 
$$
where $(,)$ is the inner product on $\Bbb T^\vee$ with values in $\Bbb Z/\ell$ defined by the Catran matrix of $\g$. 
In particular, the characteristic polynomial of $S^2$ on $H_\Lambda$ is 
$$
\chi(z)=\prod_{\lambda,\mu,\nu,\kappa\in \Bbb T^\vee}\left(z-\prod_{\alpha>0}\frac{(\Lambda_\alpha q^\lambda-q^{-\lambda})(\Lambda_\alpha q^\kappa-q^{-\kappa})}{(\Lambda_\alpha q^\mu-q^{-\mu})(\Lambda_\alpha q^\nu-q^{-\nu})}\right)^{\ell^{\dim \g-2{\rm rank \g}}}.
$$ 
\end{proposition}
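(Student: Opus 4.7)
My plan is to reduce the problem to identifying the module-trace vector $\bold m = (m_\lambda)_{\lambda \in \Bbb T^\vee}$ for $\M_\Lambda$ and then to pin it down using the explicit dynamical twist of \cite{EN}. By the paragraph preceding the statement, the ribbon pivotal structure $a$ on $\C$ is matched to $\M_\Lambda$, so Proposition \ref{scha1} yields an invertible $y = (y_\lambda) \in A_s \cong \Bbb C[\Bbb T]$ with $g_a = y S(y)^{-1}$, and $\bold m = (y_\lambda)$ is the module-trace vector. It therefore suffices to verify the stated formula for $y_\lambda$; the eigenvalue formula will then follow at once from Proposition \ref{eige}.

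The first step is to check that the proposed $\bold m$ satisfies the recursion of Proposition \ref{eigenvec}, that is, $\sum_\nu (\dim V_\nu)\, m_{\lambda+\nu} = \dim_q(V)\, m_\lambda$ for every $V \in \C$. Expanding
\[
y_\lambda = \sum_{S \subseteq \Delta_+} (-1)^{|\Delta_+ \setminus S|} \Big(\prod_{\alpha \in S} \Lambda_\alpha\Big)\, q^{(\lambda,\, 2\sigma(S) - 2\rho)}, \qquad \sigma(S) := \sum_{\alpha \in S} \alpha,
\]
a monomial $q^{(\lambda, \mu)}$ satisfies the recursion individually iff $\mu \in W \cdot (-2\rho)$, by $W$-invariance of the character of $V$. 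Subsets $S$ that are inversion sets of some $w \in W$ give $\mu = -2w\rho$ and are fine; the remaining subsets contribute to monomials outside $W \cdot (-2\rho)$, and I would show that their coefficients cancel when collected by the value of $\sigma(S)$, using the fact that $\Lambda \in T$ is a character so that $\Lambda_{\alpha+\beta} = \Lambda_\alpha \Lambda_\beta$. The smallest nontrivial case $\g = \mathfrak{sl}_3$ already exhibits this: the subsets $\{\alpha_1, \alpha_2\}$ and $\{\alpha_1 + \alpha_2\}$ both produce $\mu = 0$, with coefficients summing to $\Lambda_{\alpha_1+\alpha_2} - \Lambda_{\alpha_1}\Lambda_{\alpha_2} = 0$.

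The main obstacle is the second step, distinguishing the specific solution realized by $H_\Lambda$: because the dimension character has multiplicity greater than $1$ in $\Gr(\M_\Lambda) \otimes \Bbb C$, the recursion admits a multi-parameter family of solutions. To isolate the $\Lambda$-dependent one, I would use the explicit form of the dynamical twist ${\mathcal J}_\Lambda(\lambda)$ from \cite{EN}, Section 5, which at a root of unity factors as a finite ordered product indexed by positive roots of $\g$ (of ABRR type). Twisting $\mathfrak{u}_q(\g)$ by ${\mathcal J}_\Lambda$ conjugates the antipode by the Drinfeld-style element $Q(\lambda) = \sum {\mathcal J}_\Lambda(\lambda)^{(1)}\, S({\mathcal J}_\Lambda(\lambda)^{(2)})$, so the pivotal element of $H_\Lambda$ takes the form $g_a = Q(\lambda)\, S(Q(\lambda))^{-1}\, K_{-2\rho}$. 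Evaluating on the primitive idempotent $e_\lambda \in A_s$ and using the root-factored form of ${\mathcal J}_\Lambda$ should contribute one factor $\Lambda_\alpha q^{(\lambda,\alpha)} - q^{-(\lambda,\alpha)}$ per positive root, reconstructing the claimed $y_\lambda$ one root at a time, exactly as in the $\mathfrak{sl}_2$ argument underlying Proposition \ref{sl2prop}.

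Finally, I would substitute $\bold m$ into Proposition \ref{eige}. The multiplicity $n_{\lambda\mu\nu\kappa} = \sum_{V, W \in \Irr \C} N_{V, \lambda}^\mu\, C_{V, W}\, N_{W, \nu}^\kappa$ reduces, via $N_{V, \lambda}^{\lambda + \sigma} = \dim V_\sigma$, to a weight count in the regular representation of $\mathfrak{u}_q(\g)$, yielding the uniform value $\ell^{\dim \g - 2\rank \g}$ independently of $\lambda, \mu, \nu, \kappa$. This is consistent with the total degree check $\ell^{4\rank \g} \cdot \ell^{\dim \g - 2\rank \g} = \ell^{\dim \g + 2\rank \g} = \dim H_\Lambda$, and assembles into the characteristic polynomial stated in the proposition.
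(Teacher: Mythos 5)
Your overall strategy is the same as the paper's: compute the pivotal element of the twisted weak Hopf algebra $H_\Lambda$ from the explicit twisted antipode, read off the diagonal entries $y_\lambda$, and then feed $\bold m=(y_\lambda)$ into Proposition \ref{eige} to get the characteristic polynomial. Your step 3 (the multiplicity count, $n_{\lambda\mu\nu\kappa}=\ell^{\dim\g-2\rank\g}$ via $N_{V,\lambda}^{\lambda+\sigma}=\dim V_\sigma$) is correct and matches the paper's framework, and your step 1 (verifying that the proposed $\bold m$ solves the recursion $N_r\bold m=\dim(X_r)\bold m$) is a correct, if logically redundant, consistency check --- once $y$ is known to be the source component of the pivotal element, Proposition \ref{eigenvec} guarantees this.

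The gap is in step 2, which is the crux. You assert that the dynamical twist ``factors as a finite ordered product indexed by positive roots,'' and that evaluating on a primitive idempotent ``should contribute one factor $\Lambda_\alpha q^{(\lambda,\alpha)}-q^{-(\lambda,\alpha)}$ per positive root, reconstructing the claimed $y_\lambda$ one root at a time, exactly as in the $\mathfrak{sl}_2$ argument underlying Proposition \ref{sl2prop}.'' But Proposition \ref{sl2prop} is stated in the paper to be a special case of Proposition \ref{gprop}, so there is no independent $\mathfrak{sl}_2$ argument to appeal to; the root-factored closed form of the relevant element is exactly what needs to be established, and you have supplied neither a proof nor a reference. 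The paper's proof supplies it: by \cite{EN}, Remark 4.2.5, $S_\Lambda(x)=v^{-1}S(x)v$ with $v$ built from $\mathcal{P}(\lambda)=S(\mathcal{J}_\Lambda^{(1)})\mathcal{J}_\Lambda^{(2)}$, and then $g=q^{-2\rho}v^{-1}S(v)$, where $v^{-1}S(v)$ is computed in factored form by Proposition 2.13 of \cite{EV} specialized to roots of unity (with the coproduct-convention sign adjustment $q^{2\rho}\mapsto q^{-2\rho}$). That citation is the heavy lifting you are missing. A secondary convention issue: you use $Q(\lambda)=\sum\mathcal{J}_\Lambda^{(1)}S(\mathcal{J}_\Lambda^{(2)})$ in place of the paper's $\mathcal{P}=\sum S(\mathcal{J}_\Lambda^{(1)})\mathcal{J}_\Lambda^{(2)}$; these are different elements, and the twisted-antipode formula that actually holds in the \cite{EN} setup is the one with $\mathcal{P}$, so your $g_a=Q\,S(Q)^{-1}K_{-2\rho}$ would need to be replaced by (the idempotent-local version of) $g=q^{-2\rho}\,\mathcal P(\lambda)^{-1}S(\mathcal P)(\lambda+h)$ before any root-by-root analysis could even begin.
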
 
 
\begin{proof} Let $S_\Lambda$ be the antipode of $H_\Lambda$. By \cite{EN}, Remark 4.2.5, 
$$
S_\Lambda(x)=v^{-1}S(x)v,
$$
where
$$
v=\sum_{\lambda,\mu}E_{\lambda+\mu,\lambda}\otimes \mathcal {P}(\lambda)P_\mu
$$
and $\mathcal{P}=S(\mathcal{J}_\Lambda^{(1)})\mathcal{J}_\Lambda^{(2)}$
(using the notation of \cite{EN}). This implies that 
$$
S^2(x)=v^{-1}S(v)S^2(x)S(v)^{-1}v,
$$
and 
$$
v^{-1}S(v)=\sum_{\lambda,\mu}E_{\lambda \lambda}\otimes \mathcal P(\lambda)^{-1}S({\mathcal P})(\lambda+h)P_\mu
$$
We have $g=q^{-2\rho}v^{-1}S(v)$. Thus, the proposition follows from Proposition 2.13 in \cite{EV} 
specialized to roots of unity, taking into account that the coproduct in the quantum group used in \cite{EV} is opposite to that of \cite{EN} (hence $q^{2\rho}$ should be replaced by $q^{-2\rho}$). 
 \end{proof} 

\newpage

\appendix

\section{Module traces and inner-product structures}

\begin{center}
\bf  by Gregor Schaumann
\end{center}

\subsection{Serre functors}

In \cite{Sch2} a  different  compatibility between pivotal tensor categories and module categories is investigated. 
The goal of this appendix is to relate module traces and these so called inner-product structures. 

 Throughout this section let $\C$ be a finite tensor category over $\k$.
The right dual $x^{*}$ of $x \in \C$ has the coevaluation 
$\be \rightarrow x \otimes x^{*}$ and $\C$ has the dual distinguished invertible object $D$ satisfying $x^{****}\cong D \otimes x \otimes D^{-1} $, see \cite[Sec. 7.19]{EGNO}.
Let $\CM$ be an exact  left $\C$ module category.
The inner Hom $\IHom(-,-)$ of $\mathcal{M}$ is 
defined via a family of natural isomorphisms $$\Hom_{\C}(x,\IHom(m,n)) \,{\cong}\, 
\Hom_{\calm}( {x \otimes m},n)$$ 
for $m,n\in \calm$ and $x \in \C$. This naturally defines a left exact 
functor $\IHom(-,-)\colon \calmopp \times \calm \to \C$. Since the module category $\calm$ is an
exact $\C$-module category,  the functor $\IHom(-,-)$ is also right exact
\cite[Cor.\,3.15\,\&\,Prop.\,3.16]{EO}.  
 
This implies the existence of the following functors \cite[Def. 4.21]{FScS1}:
\begin{definition}
Let $\calm$ be a left $\C$-module. A \emph{right relative Serre functor} on $\calm$
is an en\-do\-functor $\Sr_\calm$ of $\calm$ together with a family 
of isomorphisms
\begin{equation}
   \IHom(m,n)^{*} \xrightarrow{~\cong~} \IHom(n,\Sr_\calm(m)), 
  \label{eq:Serref}
\end{equation}
which are  natural in $m,n\in\calm$.

Analogously, a \emph{left relative Serre functor} $\Sl_\calm$ comes with a family
 of natural isomorphisms
\begin{equation}
   {}^{*}\IHom(m,n) \xrightarrow{~\cong~} \IHom(\Sl_\calm(n),m).
  \label{eq:leftSerre}
  \end{equation}
\end{definition}
The relative Serre functors are twisted module functors \cite[Lemma. 4.22]{FScS1}, i.e. there 
are coherent natural isomorphisms 
\begin{equation}
 \Sl_\calm(x \otimes m) \,{\cong}\, {}^{**}x \,\otimes \, \Sl_{\calm}(m) 
  \label{eq:Sr(am)=avv.Sr(m)}
  \end{equation}
and
$ \Sr_\calm(x \otimes m) \,\cong\, x^{**} \otimes \, \Sr_\calm(m) \,. $

The Serre functors are mutually inverse: It follows from the definition that there is a chain of natural isomorphisms 
\begin{equation}
  \label{eq:inverse}
  \IHom(\Sl \Sr(n),m) \cong {}^{*}\IHom(m,\Sr(n)) \cong \IHom(n,m),
\end{equation}
for all $n,m \in \mathcal{M}$. Thus
we conclude
 from the enriched Yoneda lemma (see e.g. \cite[Lemma 4.11]{Sch2}) that $\Sl \Sr \cong \id_{\calm}$. Similarly, 
it follows that 
$\Sr \Sl \cong \id_{\calm}$.

If $\calm$ is additionally semisimple, it is also an exact module category over $\Vect$ and thus there exists moreover the \emph{$\k$-Serre functor} $\Sk$ on $\mathcal{M}$ which is defined via 
\begin{equation}
  \label{eq:k-Serre}
  \Hom_{\mathcal{M}}(m,n) \cong \Hom_{\mathcal{M}}( \Sk(n),m)^{*}.
\end{equation}
Then also $\Sk$ is an invertible twisted module functor with 
\begin{equation}
  \label{eq:twist-Sk}
  \Sk(a \otimes m) \cong a^{* *} \otimes \Sk(m).
\end{equation}
The basic relation between the two Serre functors is obtained from the action of  the dual distinguished invertible object $D$ of $\C$:
\begin{proposition}
  \label{proposition:rel-Serre}
 There is a canonical isomorphism of module functors 
\begin{equation}
  \label{eq:iso-module}
  \Sk \cong D \otimes \Sl.
\end{equation}
\end{proposition}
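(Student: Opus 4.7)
The strategy is to invoke the Yoneda lemma combined with the uniqueness of $\Sk$. Since $\Sk(n)$ is characterized up to canonical isomorphism by \eqref{eq:k-Serre}, it suffices to produce a natural isomorphism
\[
\Hom_\M\bigl(D \otimes \Sl(n),\, m\bigr) \;\cong\; \Hom_\M(m, n)^{*}
\]
of functors in $m \in \M$ (with $n$ fixed), and then verify that the resulting comparison $\Sk \cong D \otimes \Sl$ is compatible with the twisted-module-functor structures on both sides.

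To construct the isomorphism, I would pass to internal Homs and chain adjunctions. The defining adjunction for $\IHom$ gives $\Hom_\M(D \otimes \Sl(n), m) = \Hom_\C\bigl(D,\, \IHom(\Sl(n), m)\bigr)$, and \eqref{eq:leftSerre} rewrites the target as ${}^{*}\IHom(m,n)$. The left-dual adjunction in $\C$, namely $\Hom_\C(A, {}^{*}B) \cong \Hom_\C(B \otimes A, \be)$, converts this into $\Hom_\C\bigl(\IHom(m,n) \otimes D,\, \be\bigr)$. The last step applies the Radford/Nakayama duality for $\C$ in the form $\Hom_\C(X \otimes D, \be) \cong \Hom_\C(\be, X)^{*}$ (cf.\ \cite[Sec.\ 7.19]{EGNO}), specialized to $X = \IHom(m,n)$, producing $\Hom_\C(\be, \IHom(m,n))^{*} = \Hom_\M(m,n)^{*}$, as required.

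For the module-functor compatibility it suffices to show that $D \otimes \Sl$ carries the twist $(-)^{**}$ that $\Sk$ has by \eqref{eq:twist-Sk}. The twist \eqref{eq:Sr(am)=avv.Sr(m)} of $\Sl$ gives $D \otimes \Sl(x \otimes m) \cong D \otimes {}^{**}x \otimes \Sl(m)$, and applying Radford's formula $Y^{****} \cong D \otimes Y \otimes D^{-1}$ to $Y = {}^{**}x$, together with the canonical identification $({}^{**}x)^{****} \cong x^{**}$, yields the key isomorphism $D \otimes {}^{**}x \cong x^{**} \otimes D$. Coherence of the constructed isomorphism with these twists, along with naturality in $n$, is a routine diagram chase through each step of the construction.

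The main obstacle is the Radford/Nakayama duality step for $\C$: this is the deep input that forces $D$ to appear in the formula, whereas everything else is formal manipulation of inner-Hom adjunctions, duals of invertible objects, and the defining property of $\Sl$. With this duality in hand, the Yoneda characterization of $\Sk$ immediately yields the claimed canonical isomorphism $\Sk \cong D \otimes \Sl$ of module functors.
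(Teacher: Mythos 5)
Your chain of adjunctions in steps 1--4 is correct, and passing the twist of $\Sl$ through the identity $D \otimes {}^{**}x \cong x^{**} \otimes D$ (which \emph{is} the Radford isomorphism from \cite[Sec.\ 7.19]{EGNO}) is exactly how the module-functor compatibility should be checked. Your route is genuinely different from the paper's: the paper proves \eqref{eq:iso-module} by quoting two results from \cite{FScS1}, namely $\Sk \cong N^{l}$ (Eq.\ (4.25) there) and $N^{l} \cong D \otimes \Sl_\calm$ (Theorem 4.25 there), where $N^l$ is the left-exact Nakayama functor on $\calm$; you instead try to push the whole comparison into $\C$ and settle it by hand.

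The problem is in step 5, on which all the weight rests. The identity
$\Hom_\C(X \otimes D, \be) \cong \Hom_\C(\be, X)^{*}$
is \emph{not} valid for arbitrary $X \in \C$. In $\C = \Rep T_n$ (Example \ref{taft}), $D$ is the one-dimensional representation on which $g$ acts by $q$; taking $X = \be$ gives $\Hom_\C(D,\be)=0$ on the left but $\Hom_\C(\be,\be)^{*}=\k$ on the right, and taking $X = D^{-1}$ reverses the discrepancy. (Indeed, setting $X = D^{-1}$ in the claimed identity forces $D \cong \be$, so the formula can only be correct for all $X$ when $\C$ is unimodular.) The identity you want does hold when $X$ is projective -- and $\IHom(m,n)$ is projective because $\calm$ is exact -- but you need to say this, you need the isomorphism to be natural in $X$ for the Yoneda step, and, most importantly, the projective-object version is not in \cite[Sec.\ 7.19]{EGNO}: what is there is $X^{****} \cong D \otimes X \otimes D^{-1}$, a statement about the quadruple \emph{dual}, not about a $\Hom$-space duality. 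The assertion you actually need is that the Nakayama functor on $\C$ sends $\be$ to $D^{\pm 1}$ (equivalently, that the socle of $P(\be)$ is governed by $D$), with compatibility of all module structures -- which is precisely the content of the \cite{FScS1} results the paper cites. So your proof does not avoid that dependency; it relocates and mislabels it. Replacing step 5 by a correct appeal to that result, restricted to projective $X$ and with the needed naturality, your argument would become a valid hands-on alternative to the paper's citation-only proof.
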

\begin{proof}
  From \cite[Eq (4.25)]{FScS1}, we obtain that there is a module natural isomorphism $\Sk \cong N^{l}$, where $N^{l}$ is the left exact Nakayama functor. 
On the other hand, Theorem 4.25 in loc. cit. shows that $N^{l} \cong D \otimes \Sl_{\mathcal{M}}$ as module functors. This concludes the proof.
\end{proof}

The Serre functors allow to describe the double adjoints of module functors. 
Denote by ${}^{\vee}F, F^{\vee}$ the right and left adjoints of a functor $F$, respectively. They are canonically module functors, if $F$ is a module functor. 
\begin{proposition}
  \label{proposition:double-dual-module}
Let $F: \CM \rightarrow \CN$ be a 
module
functor between exact module categories. 
There is a  natural module isomorphism 
\begin{equation}
  \label{eq:left-right-adj}
  F^{\vee} \cong \Sl_{\mathcal{M}} \circ {}^{\vee}F \circ \Sr_{\mathcal{N}},
\end{equation}
which is coherent with respect to the composition of module functors. 
\end{proposition}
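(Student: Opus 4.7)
The plan is to prove the isomorphism by computing the inner-Hom valued functor represented by $\Sl_{\M}\circ {}^{\vee}F \circ \Sr_{\N}$ and checking it coincides with the one represented by $F^{\vee}$, then invoking the enriched Yoneda lemma (\cite[Lemma 4.11]{Sch2}). Recall that since $F$ is a module functor between exact module categories, both adjoints ${}^{\vee}F$ and $F^{\vee}$ exist and are canonically module functors, and the module adjunction lifts to a natural isomorphism of inner-Hom's
\begin{equation*}
  \IHom_{\M}(m,{}^{\vee}F(n))\;\cong\;\IHom_{\N}(F(m),n),
\end{equation*}
natural in $m\in\M$ and $n\in\N$. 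The right and left Serre functors satisfy, by their defining equations \eqref{eq:Serref} and \eqref{eq:leftSerre}, the relations $\IHom(n,F(m))^{*}\cong \IHom(F(m),\Sr_{\N}(n))$ and $\IHom(\Sl_{\M}(x),m)\cong{}^{*}\IHom(m,x)$, where in the first we have used $\Sr\Sl\cong \id$.

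The main calculation chains these identities together. For arbitrary $m\in\M,\ n\in\N$, I propose to write
\begin{equation*}
\IHom_{\M}\bigl(\Sl_{\M}\,{}^{\vee}F\,\Sr_{\N}(n),\,m\bigr)\;\cong\;{}^{*}\IHom_{\M}\bigl(m,\,{}^{\vee}F\,\Sr_{\N}(n)\bigr)
\end{equation*}
using the left Serre isomorphism \eqref{eq:leftSerre} together with $\Sr_{\M}\Sl_{\M}\cong\id$; then apply the module adjunction to replace $\IHom_{\M}(m,{}^{\vee}F(-))$ by $\IHom_{\N}(F(m),-)$ to obtain ${}^{*}\IHom_{\N}(F(m),\Sr_{\N}(n))$; then use \eqref{eq:Serref} in the form $\IHom_{\N}(F(m),\Sr_{\N}(n))\cong\IHom_{\N}(n,F(m))^{*}$, which after applying a further ${}^{*}(-)$ and using the rigidity isomorphism ${}^{*}(x^{*})\cong x$ yields $\IHom_{\N}(n,F(m))$. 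Finally the adjunction for $F^{\vee}$ gives $\IHom_{\N}(n,F(m))\cong\IHom_{\M}(F^{\vee}(n),m)$. The enriched Yoneda lemma then produces the desired natural isomorphism $F^{\vee}\cong \Sl_{\M}\circ{}^{\vee}F\circ \Sr_{\N}$.

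To upgrade this to an isomorphism of module functors, I would check that each step in the chain is compatible with the $\C$-module structures. The module adjunction isomorphism is a module natural transformation by construction, and the Serre functors are twisted module functors by \eqref{eq:Sr(am)=avv.Sr(m)}: $\Sr_{\N}$ brings out a factor of $x^{**}$ on the right and $\Sl_{\M}$ brings out a factor of ${}^{**}x$ on the left, and their composition with ${}^{\vee}F$ reassembles into the canonical module structure on $F^{\vee}$ (where the doubled duals cancel appropriately with the rigidity isomorphism used in the last step). The coherence with composition of module functors reduces to the statement that for composable $F,G$ both $(F G)^{\vee}\cong G^{\vee}F^{\vee}$ and ${}^{\vee}(FG)\cong {}^{\vee}G\,{}^{\vee}F$ hold as module isomorphisms, and that our constructed isomorphism is built entirely out of natural and functorial data; the check then amounts to a diagram chase through the chain of isomorphisms above.

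The main obstacle I anticipate is not the identification of the underlying functors, which is essentially forced by Yoneda, but the bookkeeping of the module structures and the verification of coherence: one must track how the twists ${}^{**}x$, $x^{**}$ for the two Serre functors interact with the module structures on ${}^{\vee}F$ and $F^{\vee}$ and with the rigidity duality isomorphisms ${}^{*}(x^{*})\cong x$ used to collapse the double dualizations. Carrying this out rigorously is a diagrammatic check, but once assembled it gives both the module-natural isomorphism and its coherence with composition in one stroke.
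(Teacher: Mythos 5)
Your proof is correct and takes essentially the same approach as the paper: it chains the same four isomorphisms (adjunction for $F^{\vee}$, the right Serre relation, adjunction for ${}^{\vee}F$, and the left Serre relation), just traversed in the opposite order, and then invokes the enriched Yoneda lemma, deferring the module-naturality and coherence to a routine diagram chase exactly as the paper does. One minor note: the invocations of $\Sr_{\M}\Sl_{\M}\cong\id$ are superfluous, since both Serre relations \eqref{eq:Serref} and \eqref{eq:leftSerre} can be applied directly without inverting the Serre functor.
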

\begin{proof}
By \cite{EO}, $F$ is exact and thus its adjoints exist. 
  The isomorphism is defined by 
  \begin{equation}
    \label{eq:proof-lr}
    \begin{split}
      \IHom(F^{\vee}(n),m) & \cong \IHom(n,F(m)) \cong {}^{*}\IHom(Fm, \Sr_{\mathcal{N}}(n))\\
&\cong {}^{*}\IHom(m,{}^{\vee}F \circ \Sr_{\mathcal{N}}(n))\\
& \cong \IHom(\Sl_{\mathcal{M}} \circ {}^{\vee}F \circ 
\Sr_{\mathcal{N}} (n),m)).
    \end{split}
  \end{equation}
using the enriched Yoneda lemma. 
That it is a module natural isomorphism as well as its coherence follow from standard arguments. 
\end{proof}
Thus it follows that 
$(F^{\vee})^{\vee} \cong \Sl_{\mathcal{N}} \circ F \circ \Sr_{\mathcal{M}}$.

Using moreover that on $\CCm$, $\Sr_{\C}=(-)^{* *}$, we conclude that
\begin{corollary}
  \label{cor:fromCtoM}
Let $m \in \mathcal{M}$ and $F_{m}: \CCm \rightarrow \CM$ the corresponding module functor 
$F_{m}(a)=a \otimes m$.
The double dual of $F_{m}$ computes as 
\begin{equation}
(F_{m}^{\vee})^{\vee}   \cong F_{\Sl_{\mathcal{M}}(m)}.
\end{equation}
\end{corollary}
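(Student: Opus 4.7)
The plan is to apply Proposition \ref{proposition:double-dual-module} directly to the module functor $F_m: \CCm \to \CM$ and then simplify the right-hand side using the two pieces of structure highlighted just before the corollary: that $\Sr_{\C} = (-)^{**}$ on $\C$ viewed as a regular module over itself, and that $\Sl_{\mathcal{M}}$ is a twisted module functor in the sense of \eqref{eq:Sr(am)=avv.Sr(m)}.

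Concretely, I would first invoke Proposition \ref{proposition:double-dual-module} with $\mathcal{N} = \mathcal{M}$ and source module $\C$, yielding a module natural isomorphism
\begin{equation*}
(F_m^{\vee})^{\vee} \;\cong\; \Sl_{\mathcal{M}} \circ F_m \circ \Sr_{\C}.
\end{equation*}
Next I would evaluate both sides on an arbitrary object $a \in \C$, getting
\begin{equation*}
(F_m^{\vee})^{\vee}(a) \;\cong\; \Sl_{\mathcal{M}}(a^{**} \otimes m),
\end{equation*}
and then apply the twisted module structure \eqref{eq:Sr(am)=avv.Sr(m)} to rewrite the right-hand side as ${}^{**}(a^{**}) \otimes \Sl_{\mathcal{M}}(m)$. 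Finally, using the canonical natural isomorphism ${}^{**}(a^{**}) \cong a$ (since $(-)^{**}$ and ${}^{**}(-)$ are mutually inverse as tensor autoequivalences of $\C$), this becomes $a \otimes \Sl_{\mathcal{M}}(m) = F_{\Sl_{\mathcal{M}}(m)}(a)$, which is the desired identification.

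The only point requiring some care is to verify that the composite isomorphism is a module natural transformation, i.e.\ that it intertwines the module structures on $(F_m^{\vee})^{\vee}$ and on $F_{\Sl_{\mathcal{M}}(m)}$. The coherence clause in Proposition \ref{proposition:double-dual-module} provides compatibility of the first step; the coherence of the twisted module isomorphism in \eqref{eq:Sr(am)=avv.Sr(m)} handles the second; and naturality of $a \cong {}^{**}(a^{**})$ as a monoidal natural isomorphism handles the third. I expect this to be the main (though standard) obstacle; once the three compatibilities are checked, the chain of isomorphisms assembles into the required module natural isomorphism $(F_m^{\vee})^{\vee} \cong F_{\Sl_{\mathcal{M}}(m)}$.
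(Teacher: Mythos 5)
Your proof is correct and matches the paper's (terse) derivation: both reduce the corollary to the double-dual formula $(F^{\vee})^{\vee}\cong \Sl_{\mathcal{N}}\circ F\circ \Sr_{\mathcal{M}}$ from Proposition~\ref{proposition:double-dual-module}, substitute $\Sr_{\C}=(-)^{**}$, and then cancel $a^{**}$ against the $^{**}(-)$-twist of $\Sl_{\mathcal{M}}$ via \eqref{eq:Sr(am)=avv.Sr(m)}. You spell out the twisted-module step and the coherence checks that the paper leaves implicit, but the argument is the same.
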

For a fixed finite tensor category $\C$ there is a 
bicategory $\mathsf{Mod}^{\mathrm{ex}}(\C)$ with objects exact module categories and morphism categories 
the categories of module functors. Every 1-morphism in  $\mathsf{Mod}^{\mathrm{ex}}(\C)$ has a left and right dual by \cite{EO} and  Proposition \ref{proposition:double-dual-module}
 allows to describe the double dual 2-functor $(-)^{* *}$ on  $\mathsf{Mod}^{\mathrm{ex}}(\C)$, which is the identity on objects and the double left adjoint on functors and natural transformations, via the Serre functors: 
 There is a  2-functor $\mathsf{S}$ on $\mathsf{Mod}^{\mathrm{ex}}(\C)$  that is defined as the identity on objects and it maps a module functor $F: \CM \rightarrow \CN$ to 
$\mathsf{S}(F)= \Sl_{\mathcal{N}} \circ F \circ \Sr_{\mathcal{M}}$. On module natural transformations it is defined analogously.  

 \begin{corollary}
\label{cor:2-fun}
   The 2-functor $(-)^{* *}$ on $\mathsf{Mod}^{\mathrm{ex}}(\C)$  is canonically isomorphic to the 2-functor $\mathsf{S}$.
 \end{corollary}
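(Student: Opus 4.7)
The plan is to unpack what needs to be verified: since both 2-functors $(-)^{**}$ and $\mathsf{S}$ agree on objects (both are the identity), what must be produced is a pseudo-natural isomorphism between them. Concretely, for every 1-morphism $F\colon \CM\to\CN$ in $\mathsf{Mod}^{\mathrm{ex}}(\C)$, one needs a module natural isomorphism $\theta_F\colon F^{**}\xrightarrow{\sim}\mathsf{S}(F)=\Sl_{\N}\circ F\circ\Sr_{\M}$, and these isomorphisms must be natural in 2-morphisms and coherent with composition of 1-morphisms.

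First I would construct $\theta_F$. By Proposition \ref{proposition:double-dual-module} applied to the left adjoint $F^{\vee}\colon \CN\to\CM$, we have a module natural isomorphism
\begin{equation*}
(F^{\vee})^{\vee}\cong \Sl_{\N}\circ{}^{\vee}(F^{\vee})\circ\Sr_{\M}\cong \Sl_{\N}\circ F\circ\Sr_{\M},
\end{equation*}
where in the second step I use the canonical identification ${}^{\vee}(F^{\vee})\cong F$ coming from the duality between left and right adjoints. This defines $\theta_F$. (Equivalently, one reads off the formula stated right after the proof of Proposition \ref{proposition:double-dual-module}.)

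Second, I would verify coherence with composition: given $F\colon\CM\to\CN$ and $G\colon\CN\to\calm'$, the diagram
\begin{equation*}
(G\circ F)^{**}\xrightarrow{\theta_{G\circ F}}\Sl_{\calm'}\circ G\circ F\circ\Sr_{\M}
\end{equation*}
must agree with $\theta_G\ast\theta_F$ through the pseudofunctor compositor $(G\circ F)^{**}\cong G^{**}\circ F^{**}$. This comes directly from the coherence clause in Proposition \ref{proposition:double-dual-module}; applying that clause to $F^{\vee}$ and $G^{\vee}$ and the identification $(G\circ F)^{\vee}\cong F^{\vee}\circ G^{\vee}$ yields the required compatibility, with the intermediate cancellation of $\Sr_{\N}\circ\Sl_{\N}\cong\id_{\N}$ coming from \eqref{eq:inverse}. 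I would also check the unit condition, i.e.\ $\theta_{\id_{\M}}$ equals the identity up to the obvious isomorphisms $\Sl_{\M}\circ\Sr_{\M}\cong\id_{\M}$.

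Third, I would verify 2-naturality: for a module natural transformation $\eta\colon F\Rightarrow F'$, the square
\begin{equation*}
\begin{array}{ccc}
F^{**} & \xrightarrow{\theta_F} & \Sl_{\N}\circ F\circ\Sr_{\M}\\
\eta^{**}\downarrow & & \downarrow \Sl_{\N}\ast\eta\ast\Sr_{\M}\\
(F')^{**}& \xrightarrow{\theta_{F'}} & \Sl_{\N}\circ F'\circ\Sr_{\M}
\end{array}
\end{equation*}
commutes. This is again already encoded in Proposition \ref{proposition:double-dual-module} applied to $\eta^{\vee}$, since the isomorphism there was constructed via the enriched Yoneda lemma from a family of natural isomorphisms of inner Homs, and naturality in $F$ of that construction is automatic.

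The only genuinely nontrivial step is the second one (coherence with composition), and fortunately Proposition \ref{proposition:double-dual-module} was formulated precisely to supply it; so the corollary really amounts to repackaging that proposition as a statement about 2-functors and checking the two straightforward diagrams above.
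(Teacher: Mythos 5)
Your proposal is correct and takes essentially the same approach as the paper. The paper treats Corollary \ref{cor:2-fun} as an immediate repackaging of Proposition \ref{proposition:double-dual-module}—relying on the formula $(F^{\vee})^{\vee}\cong\Sl_{\mathcal N}\circ F\circ\Sr_{\mathcal M}$ recorded just after that proof together with the proposition's coherence clause—and your write-up simply makes explicit the pieces (component isomorphisms, compatibility with the compositors via $\Sr\circ\Sl\cong\id$, and 2-naturality) that the paper leaves implicit.
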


\bigskip
Assume now that $\C$ has a pivotal structure. Then the functors 
$\Sl, \Sr, \Sk$ are canonical invertible module endofunctors of $\CM$ and it is natural to consider compatibilities between the pivotal structure and  $\CM$ as studied in 
\cite{Sch1}, \cite{Sch2} and the current article.

\begin{definition}
Let $(\C,a)$ be a pivotal finite tensor category and $\CM$ an exact module category. 
  \begin{enumerate}
  \item An \emph{inner-product module structure} \cite[Def. 5.2]{Sch2} on $\CM$ is a module natural isomorphism 
$\Sl \cong \id_{\calm}$.  
\item Assume that $\mathcal{M}$ is semisimple. A \emph{module trace } on $\mathcal{M}$
 is a module natural isomorphism $\Sk \cong \id_{\mathcal{M}}$.
  \end{enumerate}
\end{definition}
It is easy to see that this is precisely the definition of module trace of the current article.

\subsection{Relating the two structures}

We would like to  compare these structures in the case where they both apply, namely 
if $\calm$ is semisimple.
If $\C$ is  additionally 
a pivotal  fusion category, it is a consequence 
of \cite[Thm 5.23]{Sch2} that an 
inner-product structure on $\CM$ is the same as a module trace on $\CM$. 

We now clarify the difference between these structures futher using 
the multi-tensor category 
$\mathcal{D}=\End_{\C}(\CCm \oplus \CM)$ as in Section \ref{sec:pivot-struct-mult}.

In Proposition \ref{mult}  it is shown that for semisimple $\C$ 
and $\mathcal{M}$ there is a bijection between 
pivotal structures on $\mathcal{D}$ and pairs of a  pivotal structure on $\C$
and a module trace on $\mathcal{M}$.
A generalization of this correspondence to exact module categories  over finite tensor categories is:
\begin{proposition}
\label{proposition:inner-prod-piv}
  Let $\CM$ be an exact module category over a finite tensor category. 
There is a bijection between pivotal structures on $\mathcal{D}=\End_{\C}(\CCm \oplus \CM)$
and pairs of  
a pivotal structure
 on $\C$ and an inner-product module structure on $\mathcal{M}$.
\end{proposition}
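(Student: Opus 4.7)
The plan is to follow the template of Proposition \ref{mult} while replacing module traces with inner-product structures, using the Serre functor machinery of this appendix. The decisive input is Corollary \ref{cor:2-fun}, which identifies the double-dual 2-functor on $\mathsf{Mod}^{\mathrm{ex}}(\C)$ with the Serre composition $\mathsf{S}$; this translates a pivotal structure on $\mathcal{D}=\End_\C(\CCm\oplus\CM)$ (a tensor natural isomorphism $\id_{\mathcal{D}} \xrightarrow{\cong} (-)^{**}$) into data formulated in terms of $\Sl_\M$ and $\Sr_\M$, which is the right language for inner-product structures.

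For the forward direction, let $\Phi\colon \id_{\mathcal{D}} \xrightarrow{\cong} (-)^{**}$ be a pivotal structure on $\mathcal{D}$. Its restriction to the tensor subcategory $\mathcal{D}_{11}=\C$ yields a pivotal structure $a$ on $\C$, because $\Sr_\C=(-)^{**}$ and so the two notions of double dual agree on $\mathcal{D}_{11}$. Next, under the identification $\mathcal{D}_{12}\cong\M$ sending $m$ to $F_m(x)=x\otimes m$, Corollary \ref{cor:fromCtoM} gives $(F_m^\vee)^\vee\cong F_{\Sl_\M(m)}$, so the component of $\Phi$ at $F_m$ is a natural isomorphism $F_m\xrightarrow{\cong}F_{\Sl_\M(m)}$; equivalently, a natural isomorphism $\phi\colon \id_\M\xrightarrow{\cong}\Sl_\M$. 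The tensor compatibility of $\Phi$ with the multiplication $\mathcal{D}_{11}\otimes\mathcal{D}_{12}\to\mathcal{D}_{12}$, namely $\Phi_{x\otimes F_m}=\Phi_x\otimes\Phi_{F_m}$, translates via the twist isomorphism \eqref{eq:Sr(am)=avv.Sr(m)} into exactly the defining identity for $\phi$ to be a module natural isomorphism, so $\phi$ is an inner-product structure on $\CM$.

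For the converse, given a pair $(a,\phi)$, one must reconstruct a pivotal structure $\Phi$ on $\mathcal{D}$. Its values on $\mathcal{D}_{11}$ and $\mathcal{D}_{12}$ are prescribed by $a$ and (via Corollary \ref{cor:fromCtoM}) by $\phi$, and its values on $\mathcal{D}_{21}$ and $\mathcal{D}_{22}$ are then forced, since every object of $\mathcal{D}_{21}$ is rigidly dual to one of $\mathcal{D}_{12}$ and $\mathcal{D}_{22}$ is tensor-generated by $\mathcal{D}_{21}\otimes\mathcal{D}_{12}$. The main obstacle is the coherence check: one has to verify that these forced choices really do assemble into a globally well-defined tensor natural isomorphism $\id_{\mathcal{D}}\to(-)^{**}$, independent of how an object is presented as a composite of duals. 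This reduces, via Corollary \ref{cor:2-fun}, to the coherence of $\mathsf{S}$ as a 2-functor together with the module-naturality of $\phi$ (giving compatibility across the $\mathcal{D}_{11}\otimes\mathcal{D}_{12}$ multiplication) and the tensoriality of $a$ (giving compatibility within $\mathcal{D}_{11}$); compatibilities on $\mathcal{D}_{21}$ and $\mathcal{D}_{22}$ then follow from rigidity in $\mathcal{D}$. Once the two constructions are shown to respect these coherences, they are manifestly inverse, proving the bijection.
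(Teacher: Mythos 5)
Your extraction direction (pivotal structure on $\mathcal{D}$ $\Rightarrow$ pair $(a,\phi)$) coincides with the paper's argument: restrict to $\mathcal{D}_{11}=\C$ to get $a$, then use Corollary \ref{cor:fromCtoM} to identify $(F_m^\vee)^\vee\cong F_{\Sl_\M(m)}$, so the pivotal isomorphism at $F_m$ is a trivialisation of $\Sl_\M$; you add the useful observation that the monoidality of $\Phi$ over $\mathcal{D}_{11}\otimes\mathcal{D}_{12}$ is precisely module-naturality of $\phi$. Where you diverge is the construction direction: the paper does not build the pivotal structure on $\mathcal{D}$ from scratch — it reformulates a pivotal structure on $\mathcal{D}$ as a pivotal structure on the two-object bicategory $\mathsf{D}_{\C,\M}\subset\mathsf{Mod}^{\mathrm{ex}}(\C)$ and then invokes \cite[Thm 5.13]{Sch2}, which is precisely the statement that $(\C,a)$ pivotal plus an inner-product structure on $\calm$ makes $\mathsf{D}_{\C,\M}$ pivotal.

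Your replacement for that citation has a genuine gap. You propose to define $\Phi$ on $\mathcal{D}_{11}$ and $\mathcal{D}_{12}$ and then say the values on $\mathcal{D}_{21}$ and $\mathcal{D}_{22}$ ``are forced by rigidity'' with coherence ``reducing to the coherence of $\mathsf{S}$.'' But coherence of $\mathsf{S}$ is already the content of Corollary \ref{cor:2-fun}; what you actually need is that the \emph{specific} candidate modification $\Phi\colon\id_{\mathsf{D}_{\C,\M}}\Rightarrow\mathsf{S}$ assembled from $a$ and $\phi$ is a pivotal structure on a bicategory — i.e.\ that its components agree on composites no matter how an object of $\mathcal{D}_{22}$ is expressed as $G_n\circ F_m$, that the induced data on $\mathcal{D}_{21}$ is consistent with both left and right rigidity, and that the whole thing is monoidal for the horizontal composition in all four corners. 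None of this follows merely from the naturality of $\phi$ and tensoriality of $a$; it is exactly the nontrivial verification established in \cite[Thm 5.13]{Sch2}. As written, your converse direction is a plan rather than a proof; to close it you would either have to reproduce essentially that theorem's argument, or cite it as the paper does.
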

\begin{proof}
First note that a pivotal structure on $\mathcal{D}$ is the same as 
a pivotal structure on the full sub-bicategory $\mathsf{D}_{\C,\mathcal{M}}$ of $\mathsf{Mod}^{\mathrm{ex}}(\C)$ which has two  
objects $\CCm$ and $\CM$ and 
all higher
morphisms of $\mathsf{Mod}^{\mathrm{ex}}(\C)$ between them. 

Assume now that $\C$ is pivotal and $\CM$
has the structure of an inner-product module category. 
It follows directly from \cite[Thm 5.13]{Sch2}
that   $\mathsf{D}_{\C,\mathcal{M}}$   is a pivotal bicategory. 

On the other hand, if  $\mathsf{D}_{\C,\mathcal{M}}$  is pivotal, 
we first obtain a pivotal structure on $\C$. 
 Consider next the module functors $F_{m}: \CCm \rightarrow \CM$ , $c \mapsto c \otimes m$ 
for $m \in \mathcal{M}$. Module natural transformations between $F_{m}$ and $F_{n}$ for $m,n \in \mathcal{M}$
are the same as morphisms from $m$ to $n$. By Corollary \ref{cor:fromCtoM}, the double left dual of $F_{m}$ is given by $ F_{\Sl_{\mathcal{M}}(m)}$, so the pivotal structure on $\mathcal{D}$ gives in 
particular a trivialisation of $\Sl_{\mathcal{M}}$, 
 thus an inner-product structure. 

The two constructions are mutually inverse. 
\end{proof}

To describe the analoguos correspondence for module traces on exact module categories, 
we define 
a tensor functor $\phi: \mathcal{D} \rightarrow \mathcal{D}$
 from the action of the dual distinguished object $D$  as follows.  
Denote by $\C^{\op}$ the tensor category with reversed tensor product and by $\C^{\vee}$ the dual category with reversed arrows. 
\begin{itemize}
\item On $\C \cong \End_{\C}(\CCm)^{\op}$, $\phi$  is defined by 
  \begin{equation}
    \phi(c)= D^{-1} \otimes c \otimes D,
  \end{equation}
\item on $\End_{\C}(\CM)$, $\phi$ is the identity,
\item on $\Fun_{\C}(\CCm, \CM)$, $\phi$ is defined by 
$F_{m} \mapsto  F_{D^{-1} \otimes m}$ for $F_{m} \in \Fun_{\C}(\CCm, \CM) $ as defined above,
\item on $\Fun_{\C}(\CM , \CCm) $ we define $\phi$ by 
$\phi(G_{n})= G_{D^{-1} \otimes n}$ for $n \in \mathcal{M} ^{\vee}$  and $G_{n}= \IHom(n,-) \in \Fun_{\C}(\CM , \CCm)$.
\end{itemize}
Thus $\phi$ is the quadruple dual on $\C$.

To see that $\phi$ is indeed a monoidal functor, consider the composite $G_{n} \circ F_{m} = - \otimes \IHom(n,m): \CCm \rightarrow \CCm$. We compute 
\begin{equation}
  \begin{split}
     \phi(G_{n}) \circ \phi(F_{m})(\be)&= \IHom(D^{-1} \otimes n, D^{-1} \otimes m) \\
&\simeq D^{-1} \otimes\IHom(n,m) \otimes D= \phi(G_{n} \circ F_{m})(\be).
  \end{split}
 \end{equation}
On the other hand it follows that  $\phi(F_{m}) \circ \phi(G_{n})= F_{m} \circ G_{n}$.
The remaining coherence data of the 2-functor are obvious and the axioms for a 2-functor follow directly. 
We can finally formulate:
\begin{proposition}
\label{proposition:correspon-pivD-modtr}
  Let $\C$ be a finite tensor category and $\CM$ a semisimple module category. 
There is a bijection between monoidal natural isomorphisms $(-)^{* * } \cong \phi$  on  $\mathcal{D}=\End_{\C}(\CCm \oplus \CM)$
and pairs of a pivotal structure on $\C$ and a module trace on $\mathcal{M}$.
\end{proposition}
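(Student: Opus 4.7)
The plan is to parallel the proof of Proposition~\ref{proposition:inner-prod-piv}, but replacing the trivialization $(-)^{* *} \cong \id$ (a pivotal structure on $\mathcal{D}$) by the twisted trivialization $(-)^{* *} \cong \phi$. By Corollary~\ref{cor:2-fun}, a monoidal natural isomorphism $(-)^{* *} \cong \phi$ on $\mathcal{D}$ is equivalent to a $2$-natural isomorphism $\mathsf{S} \cong \phi$ on the full sub-bicategory $\mathsf{D}_{\C,\mathcal{M}}$ of $\mathsf{Mod}^{\mathrm{ex}}(\C)$ spanned by the objects $\CCm$ and $\CM$, and this is the form in which I would analyse it.

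Next I analyse such an isomorphism on each component of the decomposition $\mathcal{D} = \mathcal{D}_{11} \oplus \mathcal{D}_{12} \oplus \mathcal{D}_{21} \oplus \mathcal{D}_{22}$. On $\C \cong \End_{\C}(\CCm)^{\op}$, the definition $\phi(c) = D^{-1} \otimes c \otimes D$ together with the Radford isomorphism $x^{* * * *} \cong D \otimes x \otimes D^{-1}$ gives $\phi(c) \cong c^{* * * *}$ as tensor functors; hence the restriction of $(-)^{* *} \cong \phi$ yields, after cancelling one $(-)^{* *}$, a pivotal structure $\id \cong (-)^{* *}$ on $\C$. On $\End_{\C}(\CM)$, where $\phi$ is by definition the identity, the restriction is a pivotal structure on $\C_{\mathcal{M}}^{*\op}$, which is determined by coherence from the other components. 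On $\Fun_{\C}(\CCm,\CM)$, every object has the form $F_m$ for $m \in \mathcal{M}$, module natural transformations $F_m \Rightarrow F_n$ correspond bijectively to morphisms $m \to n$ in $\mathcal{M}$, and by Corollary~\ref{cor:fromCtoM} one has $\mathsf{S}(F_m) \cong F_{\Sl(m)}$, while by definition $\phi(F_m) = F_{D^{-1} \otimes m}$. Thus the restriction of the isomorphism to $\mathcal{D}_{12}$ is the same datum as a module natural isomorphism $\Sl(m) \cong D^{-1} \otimes m$.

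By Proposition~\ref{proposition:rel-Serre} one has $\Sk \cong D \otimes \Sl$, so tensoring the previous isomorphism on the left by $D$ produces a module natural isomorphism $\Sk(m) \cong m$, i.e.\ precisely a module trace on $\mathcal{M}$. Conversely, given a pivotal structure on $\C$ and a module trace on $\mathcal{M}$, one reverses each step componentwise to reconstruct $(-)^{* *} \cong \phi$, using Proposition~\ref{proposition:rel-Serre} to pass from $\Sk$ back to $\Sl$ and Corollaries~\ref{cor:fromCtoM} and~\ref{cor:2-fun} to transport the data into the $\mathsf{S}$-picture and then to $(-)^{* *}$. The data on $\Fun_{\C}(\CM,\CCm)$ is forced by adjunction from that on $\Fun_{\C}(\CCm,\CM)$, exactly as in the proof of Proposition~\ref{proposition:inner-prod-piv}.

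The main obstacle I expect is the coherence check on the mixed hom-categories $\mathcal{D}_{12}$ and $\mathcal{D}_{21}$: one must verify that the $2$-natural monoidality axioms for the isomorphism $(-)^{* *} \cong \phi$, applied to compositions $F_m \circ G_n \in \End_{\C}(\CCm)$ and $G_n \circ F_m \in \End_{\C}(\CM)$, translate precisely to the compatibility between the module trace on $\mathcal{M}$ and the pivotal structure on $\C$. This is parallel to the verification in Proposition~\ref{proposition:inner-prod-piv}; the extra bookkeeping consists in tracking the $D$-twists in $\phi$, which are absorbed on $\C$ by the Radford isomorphism and on $\CM$ by the relation $\Sk \cong D \otimes \Sl$, so that the pivotalisation of $\mathcal{D}$ by $\phi$ corresponds exactly to the $\k$-Serre trivialisation rather than the relative Serre trivialisation on $\mathcal{M}$.
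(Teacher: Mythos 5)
Your proposal is correct and follows essentially the same route as the paper's proof: both use Corollary~\ref{cor:2-fun} and Corollary~\ref{cor:fromCtoM} to translate the twisted pivotal datum $(-)^{**}\cong\phi$ on $\mathcal{D}$ into a pivotal structure on $\C$ together with a module natural isomorphism $\Sl_{\calm}\cong D^{-1}\otimes -$, and then invoke Proposition~\ref{proposition:rel-Serre} ($\Sk\cong D\otimes\Sl$) to identify the latter with a trivialisation of $\Sk$, i.e.\ a module trace. The only difference is stylistic — you spell out the four components of $\mathcal{D}$ and flag the coherence check on $\mathcal{D}_{12},\mathcal{D}_{21}$ explicitly, whereas the paper treats this more tersely — but the substance is the same.
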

\begin{proof}
Let $\C$ be pivotal and assume $\CM$ has a module trace. It follows from 
Proposition  \ref{proposition:rel-Serre} that the module trace induces a module natural isomorphism 
\begin{equation}
  \Sl_{\mathcal{M}} \cong D^{-1} \otimes -
\end{equation}
  of module endofunctors on $\CM$ (the pivotal structure is needed for the module structure of $D^{-1} \otimes -$). Furthermore, the pivotal structure provides a trivialisation of 
the relative Serre functor on $\CCm$ (which is just the double dual), thus 
the monoidal functor $\mathsf{S}$ on $\mathcal{D}$ from Corollary \ref{cor:2-fun} 
can be described in terms of the action of the dual distinguished object $D$. This produces 
with Corollary  \ref{cor:2-fun} a monoidal isomorphism to the functor $\phi$.

Assume to the contrary, that a monoidal isomorphism from the double dual to $\phi$ on $\mathcal{D}$ is specified. On $\C$ this produces a monoidal isomorphism from the quadruple dual to the double dual, thus a pivotal structure. 
Since the double dual of $F_{m}$ is given by $F_{\Sl_{\mathcal{M}}(m)}$, we moreover obtain 
an isomorphism $\Sl_{\mathcal{M}} \cong D^{-1} \otimes- $, which is a module natural isomorphism, thus it gives a module trace on $\CM$, 
again by  Proposition \ref{proposition:rel-Serre}.
\end{proof}

There are two main situations in which we have a bijection between  the two structures:
If $\C$ is unimodular, we use as in Section \ref{dps} the notation $\overline{a}$ for the 
``dual'' pivotal structure defined by  $\overline{a}=\iota\circ a^{-1}$ with $\iota$ the canonical isomorphism to the quadruple dual.  

First we  obtain as a consequence of Propositions \ref{proposition:inner-prod-piv} and  \ref{proposition:correspon-pivD-modtr}:
\begin{corollary}
\label{corollary:biject}
  Let $\C$ be unimodular and $\CM$ a semisimple module category. There are bijections 
  \begin{equation}
    \label{eq:1}
    \begin{split}
       \{ (\C,a) ,  \CM  \quad \text{inner product} \} \leftrightarrow &
\{ \text{pivotal structures on} \quad \D \} \\
 \leftrightarrow & 
\{ (\C,\overline{a}),  \CM  \quad \text{module trace} \}
    \end{split}
   \end{equation}
\end{corollary}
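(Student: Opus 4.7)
The strategy is to specialize Propositions \ref{proposition:inner-prod-piv} and \ref{proposition:correspon-pivD-modtr} to the unimodular setting, using pivotal structures on $\D = \End_\C(\CCm \oplus \CM)$ as a common middle term for both bijections. The crucial observation is that when $\C$ is unimodular, i.e.\ $D \cong \bold 1$, the auxiliary tensor functor $\phi$ on $\D$ reduces to the identity functor: on each of the four blocks of $\D$, $\phi$ acts by left or right tensoring with $D^{\pm 1}$, which becomes trivial. Hence a monoidal natural isomorphism $(-)^{**} \cong \phi$ on $\D$ is precisely a pivotal structure on $\D$.

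With this identification, Proposition \ref{proposition:inner-prod-piv} supplies the first bijection between $\{(a, \text{inner product on } \CM)\}$ and pivotal structures on $\D$, and Proposition \ref{proposition:correspon-pivD-modtr} supplies the second bijection between pivotal structures on $\D$ and $\{(\overline{a}, \text{module trace on } \CM)\}$. The only non-formal point is to check that the two different extractions of a pivotal structure on $\C$ from a given pivotal structure on $\D$ differ precisely by the bar operation $a \mapsto \overline{a} = \iota \circ a^{-1}$ defined in Subsection \ref{dps}. Under Proposition \ref{proposition:inner-prod-piv}, the pivotal structure on $\C$ is simply the restriction of $(-)^{**} \cong \Id$ to $\D_{11} = \C$, which yields $a$. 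Under Proposition \ref{proposition:correspon-pivD-modtr}, the restriction of the same iso, now viewed as $(-)^{**} \cong \phi|_\C$, is an iso $(-)^{**} \cong (-)^{****}$, and one composes with $\iota^{-1}$ to extract a pivotal structure. This composition is, by definition, $\overline{a}$. The compatibility of the $\M$-parts of the two bijections is controlled by Proposition \ref{proposition:rel-Serre}, which in the unimodular case specializes to $\Sk \cong \Sl$, so that inner-product and module-trace data on $\CM$ are interchangeable.

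The main obstacle is verifying the coherence of the two identifications of $\phi|_\C$: on one hand with $\Id_\C$ (via unimodularity, $D \cong \bold 1$), and on the other with $(-)^{****}$ (via the general definition of $\phi$ and the Radford formula $X^{****} \cong D \otimes X \otimes D^{-1}$). One must check that the composite of these two identifications on $\C$ is precisely the Radford isomorphism $\iota$, rather than its inverse or a twist by a nontrivial tensor automorphism of the identity. Once this (formal but delicate) diagram chase is completed, the two bijections of the corollary follow by composing Propositions \ref{proposition:inner-prod-piv} and \ref{proposition:correspon-pivD-modtr} through the intermediate set of pivotal structures on $\D$.
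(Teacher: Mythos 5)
Your proposal is correct and takes essentially the same route as the paper, which presents the corollary as an immediate consequence of Propositions \ref{proposition:inner-prod-piv} and \ref{proposition:correspon-pivD-modtr}, with the unimodularity hypothesis $D\cong\bold 1$ collapsing $\phi$ to the identity so that monoidal isomorphisms $(-)^{**}\cong\phi$ become pivotal structures on $\D$. The one point you flag as outstanding --- that the two extractions of a pivotal structure on $\C$ differ by $a\mapsto\overline{a}=\iota\circ a^{-1}$, where $\iota$ reduces to the canonical map $\Id\to(-)^{****}$ --- is indeed the only content beyond bookkeeping, and the paper likewise leaves it implicit; your identification of where the Radford isomorphism enters is the right observation to close that gap.
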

Second we consider the case that the dual category is unimodular. 
 \begin{proposition}
\label{proposition:unimod-dual}
   Assume that $\C_{\calm}^{*}$ is unimodular. Then $\CM$ has a module trace over $(\C,a)$ if and only if it has an inner-product structure over 
$(\C,a)$.
 \end{proposition}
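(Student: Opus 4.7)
The plan is to reduce the proposition to identifying, under the unimodularity of $\C_\M^*$, the module endofunctor $L_D := D \otimes -$ of $\M$ with the identity. Indeed, by Proposition \ref{proposition:rel-Serre} we have $\Sk \cong D \otimes \Sl$ as module functors, while an inner-product structure on $\CM$ is a module trivialization of $\Sl$ and a module trace is a module trivialization of $\Sk$. Hence if $L_D \cong \id_\M$ as module functors, a trivialization of either Serre functor yields a trivialization of the other, and the two structures coincide in terms of existence (and correspond bijectively).

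First I would endow $L_D$ with a module functor structure using the pivotal structure $a$ on $\C$. The Radford isomorphism gives ${}^{**}x \cong D^{-1} \otimes x \otimes D$, and precomposing with $a_x^{-1}$ produces a natural isomorphism $D \otimes x \xrightarrow{\sim} x \otimes D$ in $\C$. This datum turns $L_D: \M \to \M$ into a module functor, defining an invertible object $[L_D] \in \C_\M^*$.

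The key step is to identify $[L_D]$ with (the inverse of) the distinguished invertible object of $\C_\M^*$. One route is via the left Nakayama functor: by Theorem 4.25 of \cite{FScS1} we have $N^l \cong D \otimes \Sl_\M$ on $\M$, whereas on the dual side the Nakayama functor of $\C_\M^*$ is described by tensoring with the distinguished invertible object of $\C_\M^*$. A Morita-theoretic comparison of these two descriptions, using the pivotal structure $a$ to trivialize double duals on the $\C$-side, shows that $[L_D]$ represents the distinguished invertible object of $\C_\M^*$ (up to inversion). Unimodularity of $\C_\M^*$ then forces $[L_D] \cong \be$ in $\C_\M^*$, so that $L_D \cong \id_\M$ as a module endofunctor of $\M$. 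Combining this with $\Sk \cong D \otimes \Sl$ yields $\Sk \cong \Sl$ as module functors, and the proposition follows.

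The main obstacle is this Morita identification of $[L_D]$ with the distinguished invertible object of $\C_\M^*$; it requires either invoking the appropriate result from Shimizu's work on Nakayama and Radford isomorphisms for module categories, or a direct computation tracking the distinguished invertible object across the duality between $\C$ and $\C_\M^*$. Once this identification is established, the remainder of the argument is a short formal consequence of the two-paragraph setup above.
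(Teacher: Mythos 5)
Your plan hinges on a single claim: that the module functor $L_D = D_\C\otimes-$ on $\M$, with its module structure coming from the Radford isomorphism and $a$, is identified in $\C_\M^*$ with $D_{\C_\M^*}^{\pm1}$, so that unimodularity of $\C_\M^*$ \emph{alone} forces $L_D\cong\id_\M$. That claim is false in general, and you flag it as unresolved; it is in fact the whole content of the proposition. What the Radford $S^4$ formula for $\D=\End_\C(\CCm\oplus\CM)$ (the paper's equation \eqref{eq:quadrupel-D}) actually gives is
$$m^{****}\;\cong\; D_\C^{-1}\otimes m\otimes D_{\C_\M^*},$$
and combining this with $m^{****}\cong\Sl_\M^2(m)$ (Corollary~\ref{cor:fromCtoM} iterated) and $\Sk\cong D_\C\otimes\Sl$ (Proposition~\ref{proposition:rel-Serre}), one gets, after untwisting via $a$ and using $D_\C^{**}\cong D_\C$, the relation
$$\Sk_\M^2\;\cong\;(D_\C\otimes-)\circ(-\otimes D_{\C_\M^*}).$$
Thus unimodularity of $\C_\M^*$ only yields $\Sk_\M^2\cong D_\C\otimes-$ as module functors, not $D_\C\otimes-\cong\id_\M$. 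To trivialize $D_\C\otimes-$ one must additionally know that $\Sk_\M^2\cong\id$ (equivalently, that $\Sl_\M^2\cong\id$), and that is exactly what the existence of one of the two given structures supplies: a module trace gives $\Sk\cong\id$, an inner-product structure gives $\Sl\cong\id$. Without that input, $D_\C\otimes-$ could well be a nontrivial invertible object of $\C_\M^*$ whose underlying linear endofunctor is the identity.

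The paper's proof makes this dependence explicit: it first converts the hypothesized inner-product structure (via Proposition~\ref{proposition:inner-prod-piv}) or module trace (via Proposition~\ref{proposition:correspon-pivD-modtr}) into an identity $m\cong m^{****}$ or $m^{****}\cong D_\C^{-2}\otimes m$ in $\D$, and only then combines this with \eqref{eq:quadrupel-D} and unimodularity of $\C_\M^*$ to conclude $D_\C\otimes-\cong\id_\M$, at which point Proposition~\ref{proposition:rel-Serre} finishes the job in the way you describe. To repair your argument you should replace the unproved identification $[L_D]\cong D_{\C_\M^*}^{\pm1}$ with this two-step scheme: use the assumed structure to pin down $m^{****}$ on one side, compare with \eqref{eq:quadrupel-D} on the other, and deduce $D_\C\otimes-\cong\id_\M$; the final reduction via $\Sk\cong D_\C\otimes\Sl$ is then as you wrote it.
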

 \begin{proof}
There is an obvious generalization of the Radford theorem, see \cite[Sec. 7.19]{EGNO}, to multitensor categories and applied to $\mathcal{D}=\End_{\C}(\CCm \oplus \CM)$  we obtain isomorphisms
\begin{equation}
  \label{eq:quadrupel-D}
  m^{****} \simeq  D_{\C}^{-1} \otimes m \otimes  D_{\C_{\calm}^{*}},
\end{equation}
for $m \in \calm$ considered under the equivalence $\Fun_{\C}(\CCm, \CM) \cong \calm$ and 
where $ D_{\C_{\calm}^{*}}, D_{\C}$ are the dual distinguished invertible objects of $\C_{\calm}^{*} $ and $\C$, 
respectively.  
Suppose that $\C_{\calm}^{*}$ is unimodular and $\CM$ is inner-product, then  using 
Proposition \ref{proposition:inner-prod-piv}, 
$m \simeq m^{****} \simeq D_{\C}^{-1}\otimes m $. Hence we obtain that  $D_{\C} \otimes - \simeq \id_{\calm}$ as $\C$-module functors and with 
 Proposition \ref{proposition:rel-Serre} we conclude that $\CM$ has a module trace. 

Conversely, if $\CM$ has a module trace, Proposition \ref{proposition:correspon-pivD-modtr} implies 
$m^{****} \simeq \phi^{2}(m)=D^{-2}_{\C} \otimes m$ and  Equation \eqref{eq:quadrupel-D} implies that  $D_{\C} \otimes -$ is isomorphic to $ \id_{\mathcal{M}}$ as module functor, thus $\CM$ is also inner-product. 
 \end{proof}

\medskip

Next we discuss  situations where the two structures disagree.
First we have as a  direct consequence  from Proposition \ref{proposition:rel-Serre}: 
\begin{corollary}
  If a semisimple module category $\CM$ over a pivotal finite tensor category $(\C,a)$
admits a module trace and the module functor $D \otimes - : \CM \rightarrow \CM$ is not isomorphic 
to the identity  functor as a linear functor, $\CM$ does not admit an inner-product module structure over any pivotal structure of $\C$. 
\end{corollary}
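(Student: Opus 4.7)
The plan is to argue by contradiction, combining the three functorial isomorphisms that the hypotheses and Proposition \ref{proposition:rel-Serre} supply, and then observe that they force $D\otimes -$ to be isomorphic to the identity just as a $\k$-linear functor. Crucially, although the functors $\Sk$, $\Sl$ acquire their $\C$-module structures only after a pivotal structure on $\C$ is fixed, the underlying endofunctors of $\M$ are intrinsic, so the chain of isomorphisms we want to concatenate makes sense as isomorphisms of plain linear functors, independently of any pivotal structure on $\C$.

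Suppose, contrary to the conclusion, that $\CM$ admits both a module trace with respect to the given pivotal structure $a$ and an inner-product structure with respect to some pivotal structure $a'$ of $\C$. The first hypothesis gives a natural isomorphism $\Sk \cong \id_{\M}$, which in particular is an isomorphism of linear endofunctors. The second hypothesis gives $\Sl \cong \id_{\M}$ as a module natural isomorphism with respect to $a'$, hence in particular as an isomorphism of linear endofunctors. Proposition \ref{proposition:rel-Serre} yields a canonical isomorphism $\Sk \cong D\otimes \Sl$; although the statement there is one of module functors, the underlying isomorphism of linear functors is well-defined independently of the choice of pivotal data on $\C$, since it is obtained from the canonical identifications $\Sk \cong N^l \cong D\otimes \Sl$.

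Splicing these three isomorphisms together at the level of linear endofunctors of $\M$ produces
\[
\id_{\M} \,\cong\, \Sk \,\cong\, D\otimes \Sl \,\cong\, D\otimes \id_{\M} \,=\, D\otimes -,
\]
contradicting the hypothesis that $D\otimes -\colon \CM\to \CM$ is not isomorphic to $\id_{\M}$ as a linear functor. This establishes the corollary.

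The one point to check carefully in a full write-up is the second paragraph claim: the isomorphism $\Sk \cong D\otimes \Sl$ from Proposition \ref{proposition:rel-Serre} is formulated at the module level, but it is deduced from general facts about Nakayama functors that do not require a pivotal structure, so it descends to an isomorphism of underlying linear endofunctors. This is the only delicate step; the remainder is a three-line concatenation of natural isomorphisms.
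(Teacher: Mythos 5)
Your proof is correct and is essentially the argument the paper intends when it calls the corollary ``a direct consequence'' of Proposition \ref{proposition:rel-Serre}: splice $\id_\M \cong \Sk \cong D\otimes \Sl \cong D\otimes \id_\M$ and read off an isomorphism of linear functors. You have also correctly identified and addressed the one subtle point, namely that while a module trace and an inner-product structure each require a choice of pivotal structure on $\C$ to endow $\Sk$, respectively $\Sl$, with the structure of an untwisted module functor, the underlying $\k$-linear endofunctors $\Sk$, $\Sl$, $N^l$ and the linear isomorphism $\Sk\cong D\otimes\Sl$ are intrinsic to the exact module category $\CM$ and independent of any such choice -- which is precisely what lets the conclusion range over \emph{any} pivotal structure of $\C$, not just the one matching the module trace.
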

This situation can indeed happen:
\begin{example}
\label{example:Taft-no-ip} 
  Let $\C=\Rep T_n$ and $\calm=\Rep {\Bbb Z}/n$ be as in the Example \ref{taft}, i.e. 
$\CM$ has a module trace over $\C$. The dual distinguished invertible object $D$ of $\C$ is the one-dimensional representation with action of $g$ by $q$ and $x$ by $0$. It  
 acts non-trivially on $\Gr(\calm)$, thus $\CM$ cannot admit an inner-product structure for any pivotal structure of $\C$. 
\end{example}
In general for module categories from   tensor functors we have:
\begin{example}
  Let $F: \C \rightarrow \C'$ be a pivotal (exact) tensor functor between pivotal finite categories and let $\C'$ be semisimple. Then $\C'$ is a module category over $\C$ that is matched to the given pivotal structure. It has an inner-product structure if and only if $F(D_{\C})=\be$. 
\end{example}

We do not know of an example of the converse kind that has an inner-product structure but cannot possess a matched pivotal structure. 
 
However, reversing Proposition \ref{proposition:unimod-dual} we obtain:
\begin{corollary} 
\label{cor:conj-not-matched}
  Suppose $\C$ is unimodular and  $\C_\calm^*$ is not and suppose $(\C,a)$ is matched
to $\CM$.
Then $(\C, \overline{a})$
has an inner-product structure over $\CM$, but it cannot have a module
trace.
\end{corollary}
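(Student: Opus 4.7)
The plan is to prove the two assertions in sequence, with the first following almost immediately from Corollary \ref{corollary:biject} and the second requiring a contrapositive argument using the Radford-type isomorphism \eqref{eq:quadrupel-D} inside the multi-tensor category $\D$.

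For the first assertion, since $\C$ is unimodular, Corollary \ref{corollary:biject} supplies a bijection between module traces on $\CM$ with respect to $(\C,a)$ and inner-product structures on $\CM$ with respect to $(\C,\overline{a})$. By hypothesis $(\C,a)$ is matched to $\CM$, so the former set is nonempty; hence so is the latter, and $(\C,\overline{a})$ carries an inner-product structure over $\CM$.

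For the second assertion, I would argue by contradiction. Suppose that $\CM$ admits a module trace over $(\C,\overline{a})$. Applying Corollary \ref{corollary:biject} with $a$ replaced by $\overline{a}$ (so that $\overline{\overline{a}}=a$), this is equivalent to an inner-product structure on $\CM$ over $(\C,a)$. By Proposition \ref{proposition:inner-prod-piv}, the data of a pivotal structure on $\C$ together with an inner-product structure on $\CM$ produces a pivotal structure on $\D$, so that the double left dual squared is trivialized on 1-morphisms of $\D$. Applied to $m \in \CM$ viewed as a 1-morphism $\CCm \to \CM$, this yields a module-natural isomorphism $m^{****} \simeq m$. Combining with the Radford isomorphism \eqref{eq:quadrupel-D}, $m^{****} \simeq D_{\C}^{-1}\otimes m \otimes D_{\C_\calm^*}$, and with the unimodularity $D_{\C} \simeq \be$, one obtains a module-natural isomorphism $m \simeq m \otimes D_{\C_\calm^*}$ for all $m \in \CM$. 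This says that the right action of $D_{\C_\calm^*}$ on $\CM$ is module-naturally isomorphic to the identity 1-endofunctor, whence $D_{\C_\calm^*} \simeq \be$ in $\C_\calm^*$, contradicting the assumption that $\C_\calm^*$ is not unimodular.

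The main delicate point is ensuring that the isomorphisms in play are genuinely module-natural, not merely natural as morphisms of underlying objects in $\CM$; this is what allows the conclusion $D_{\C_\calm^*} \simeq \be$ to be drawn inside $\C_\calm^* = \Fun_\C(\CM,\CM)$, rather than only as an equivalence of underlying linear endofunctors. Since both \eqref{eq:quadrupel-D} and the pivotal trivialization of $(-)^{****}$ obtained from Proposition \ref{proposition:inner-prod-piv} are statements natively formulated at the level of the bicategory underlying $\D$, this module-naturality should follow from careful 2-categorical bookkeeping, but it is the only nonformal ingredient in the argument.
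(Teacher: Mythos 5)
Your proof is correct and takes essentially the same approach as the paper, which proves the nonexistence of the module trace by combining the Radford isomorphism~\eqref{eq:quadrupel-D} in $\D$ with the unimodularity of $\C$ to deduce that otherwise $D_{\C_\calm^*}\otimes -$ would be trivial as a module functor. The only cosmetic difference is that you route the hypothetical module trace over $(\C,\overline{a})$ through Corollary~\ref{corollary:biject} and Proposition~\ref{proposition:inner-prod-piv} to get a pivotal structure on $\D$, whereas the paper's argument implicitly invokes Proposition~\ref{proposition:correspon-pivD-modtr} directly to obtain $m^{****}\simeq \phi^2(m)=D_\C^{-2}\otimes m=m$; both routes arrive at the same trivialization $m^{****}\simeq m$ and then read off the contradiction from~\eqref{eq:quadrupel-D} identically. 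You were also right to flag module-naturality as the nonformal point to check, and the first assertion you derive cleanly from Corollary~\ref{corollary:biject}, which the paper leaves tacit.
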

\begin{proof}
  If  $\CM $ has also a module trace  over  $(\C, \overline{a})$, it follows 
from Equation \eqref{eq:quadrupel-D}  that the action of $D_{\C_{\calm}^{*}}$ on $\CM$ is isomorphic to $\id_{\calm}$ as module functors, thus $\C_{\calm}^{*}$ is unimodular and we have a contradiction. 
\end{proof}

\begin{example}
  Example \ref{example:Taft-no-ip}  provides also a case where Corollary \ref{cor:conj-not-matched} applies: The category $\C \boxtimes \C_{\calm}^{*}$ is not unimodular and acts on 
$\calm$. The dual category is the center $\mathcal{Z}(\C)=\Rep(\mathfrak{u}_q(\mathfrak{sl}_2)) \boxtimes \Rep {\Bbb Z}/n$ which is unimodular and inherits a pivotal structure $a$ from $\C$. 
Then  $(\mathcal{Z}(\C), a)$ is matched to $\calm$, while $(\mathcal{Z}(\C),\overline{a})$ has an inner-product structure but is not matched. 
\end{example}

\end{document}